\theoremstyle{plain}
\newtheorem{theorem}{Theorem}[section]
\newtheorem{lemma}[theorem]{Lemma}
\theoremstyle{remark}
\newtheorem{remark}[theorem]{Remark}
\begin{document}
\allowdisplaybreaks[4]
\numberwithin{equation}{section} \numberwithin{figure}{section}
\numberwithin{table}{section}
\def\O{\Omega}
\def\p{\partial}
\def\R{\mathbb{R}}
\def\argmin{\mathop{\rm argmin}}
\def\ES{H^2_0(\O)}
\def\cT{\mathcal{T}}
\def\cE{\mathcal{E}}
\def\cV{\mathcal{V}}
\def\cC{\mathcal{C}}
\def\TSum{\sum_{T\in\cT_h}}
\def\ESum{\sum_{e\in\cE_h}}
\def\VSum{\sum_{p\in\cV_h}}
\def\ESumI{\sum_{e\in\cE_h^i}}
\def\Mean#1{\Big\{\hspace{-5pt}\Big\{\frac{\p^2 #1}{\p n^2}\Big\}\hspace{-5pt}\Big\}}
\def\Jump#1{\Big[\hspace{-3pt}\Big[\frac{\p #1}{\p n}\Big]\hspace{-3pt}\Big]}
\def\jump#1{[\hspace{-1pt}[{\p #1}/{\p n}]\hspace{-1pt}]}
\def\jumpTwo#1{[\hspace{-1pt}[{\p^2 #1}/{\p n^2}]\hspace{-1pt}]}
\def\jumpThree#1{[\hspace{-1pt}[{\p^3 #1}/{\p n^3}]\hspace{-1pt}]}
\def\HTh{H^2(\O;\cT_h)}
\def\Osch{\mathrm{Osc}_h}
\def\hT{h_{\scriptscriptstyle T}}
\def\hp{h_p}
\def\etaT{\eta_{\scriptscriptstyle T}}
\def\etaeO{\eta_{e,{\scriptscriptstyle 1}}}
\def\etaeTwo{\eta_{e,{\scriptscriptstyle 2}}}
\def\etaeThree{\eta_{e,{\scriptscriptstyle 3}}}
\def\zT{z_{\scriptscriptstyle T}}
\def\OSC{{\mathrm{Osc}(f;\cT_h)}}
\def\Osc{{\mathrm{Osc}}}
\def\tK{\tilde{K}}
\def\tu{\tilde{u}}
\def\tl{\tilde{\lambda}}
\def\d{\displaystyle}
\def\bT{b_{\scriptscriptstyle T}}
\def\CS{C_{\scriptscriptstyle \rm Sobolev}}
\def\CPF{C_{\scriptscriptstyle \rm PF}}
\def\tbar{|\!|\!|}
\def\bfT{\bar f_{\scriptscriptstyle T}}
 \title[{\em A Posteriori} Analysis for the Obstacle Problem of Kirchhoff Plates]
 {An {\em A Posteriori} Analysis of $\bm C^{\bf 0}$ Interior Penalty  Methods
 for the Obstacle Problem of Clamped Kirchhoff Plates}
\author[S.C. Brenner]{Susanne C. Brenner}
\address{Susanne C. Brenner, Department of Mathematics and Center for
Computation and Technology, Louisiana State University, Baton Rouge,
LA 70803} \email{brenner@math.lsu.edu}
\author[J. Gedicke]{Joscha Gedicke}
\address{Joscha Gedicke, Department of Mathematics and Center for
Computation and Technology, Louisiana State University, Baton Rouge,
LA 70803} \email{jgedicke@math.lsu.edu}
\author[L.-Y. Sung]{Li-yeng Sung} \address{Li-yeng Sung,
 Department of Mathematics and Center for Computation and Technology,
 Louisiana State University, Baton Rouge, LA 70803}
\email{sung@math.lsu.edu}
\author[Y. Zhang]{Yi Zhang}
\address{Yi Zhang, Department of Mathematics, University of Tennessee,
Knoxville, TN 37996}\email{yzhan112@utk.edu}
\keywords{Kirchhoff plates, obstacle problem, {\em a posteriori} analysis, adaptive,
 $C^0$ interior penalty methods, discontinuous Galerkin methods, fourth order variational inequalities}
\begin{abstract}
  We develop an {\em a posteriori} analysis of $C^0$ interior penalty methods
  for the displacement obstacle problem of clamped Kirchhoff plates.  We show that
  a residual based error estimator originally designed for $C^0$ interior
  penalty methods for the boundary value problem of clamped Kirchhoff plates can
  also be used for the obstacle problem.  We obtain reliability and efficiency
  estimates for the error estimator and introduce an adaptive algorithm based on
  this error estimator.  Numerical results indicate that the performance of the
  adaptive algorithm is optimal for both quadratic and cubic $C^0$ interior
  penalty methods.
\end{abstract}
\subjclass[2010]{65N30, 65N15, 65K15, 74K20, 74S05}
\thanks{The work of the first and third authors was supported in part
 by the National Science Foundation under Grant No.
 DMS-13-19172.  The work of the second author was supported in part by a fellowship
 within the Postdoc-Program of the German Academic Exchange Service (DAAD)}
\maketitle
\section{Introduction}\label{sec:Introduction}
 Let $\O\subset\R^2$ be a bounded polygonal domain, $f\in L_2(\O)$, $\psi\in C(\bar\O)\cap C^2(\O)$ and
 $\psi<0$ on $\p\O$.
 The displacement obstacle problem for the clamped Kirchhoff plate is to find
\begin{equation}\label{eq:Obstacle}
  u=\argmin_{v\in K}\Big[\frac12 a(v,v)-(f,v)\Big]
\end{equation}
 where
\begin{equation}\label{eq:BilinearForms}
  a(w,v)=\int_\O D^2w:D^2 v\,dx=\int_\O \sum_{i,j=1}^2
  \Big(\frac{\p^2 w}{\p x_i\p x_j}\Big)\Big(\frac{\p^2 v}{\p x_i\p x_j}\Big)
   dx, \quad (f,v)=\int_\O fv\,dx
\end{equation}
 and
\begin{equation}\label{eq:KDef}
  K=\{v\in H^2_0(\O):\,v\geq\psi\quad\text{in}\quad \O\}.
\end{equation}
\par
 The unique solution $u\in K$ of \eqref{eq:Obstacle}--\eqref{eq:KDef}
 is characterized by the variational inequality
\begin{equation*}
  a(u,v-u)\geq (f,v-u)\qquad\forall\,v\in K,
\end{equation*}
 which can be written in the following equivalent complementarity form:
\begin{equation}\label{eq:ComplementarityCondition}
 \int_\O (u-\psi)\,d\lambda=0,
\end{equation}
 where the Lagrange multiplier $\lambda$ is the nonnegative Borel measure defined by
\begin{equation}\label{eq:lambdaDef}
  a(u,v)=(f,v)+\int_\O v\,d\lambda  \qquad \forall\,v\in H^2_0(\O).
\end{equation}
\begin{remark}\label{rem:SupportOfLambda}
  Since $u>\psi$ near $\p\O$, the support of $\lambda$ is disjoint from $\p\O$ because of
  \eqref{eq:ComplementarityCondition}.
\end{remark}
\begin{remark}\label{rem:lambda}
 We can treat $\lambda$ as a member of $H^{-2}(\O)=[H^2_0(\O)]'$ such that
\begin{equation*}
  \langle\lambda,v\rangle=\int_\O v\,d\lambda \qquad \forall\,v\in H^2_0(\O).
\end{equation*}
\end{remark}
\par
 $C^0$ interior penalty methods
 \cite{EGHLMT:2002:DG3D,BSung:2005:DG4,BNeilan:2011:SingularPerturbation,
 BGGS:2012:CH,Brenner:2012:C0IP,GGN:2013:C0IP}
 form a natural hierarchy of
  discontinuous Galerkin methods that are proven to be effective
  for fourth order elliptic boundary value problems.
 The goal of this paper is to develop an {\em a posteriori} error analysis of
 $C^0$ interior penalty methods
 for the obstacle problem defined by \eqref{eq:Obstacle}--\eqref{eq:KDef}.
 While there is a substantial literature on the {\em a posteriori} error analysis of finite
 element methods for second order
 obstacle problems (cf.
\cite{HK:1994:MGObstacle,CN:2000:Obstacle,Veeser:2001:Obstacle,NSV:2003:Obstacle,BC:2004:Obstacle,
NSV:2005:Contact,SV:2007:Adaptive,
BHS:2008:Obstacle, BCH:2009:Obstacle,GP:2014:Obstacle,GP:2015:Quadratic,CH:2015:Obstacle}
and the references therein),
 as far as we know this is the first paper on the {\em a posteriori} error analysis
 for the displacement obstacle problem of Kirchhoff plates.
 We note that there is a
 fundamental difference between second order and fourth order obstacle problems, namely
 that the Lagrange multipliers for the fourth order
 discrete obstacle problems can be represented naturally as  sums of Dirac point measures
 (cf. Section~\ref{sec:C0IP}), which
 leads to a simpler {\em a posteriori} error analysis (cf. Section~\ref{sec:Reliability} and
 Section~\ref{sec:Efficiency}).
\par
 The rest of the paper is organized as follows. We recall the $C^0$ interior penalty
 methods in Section~\ref{sec:C0IP} and analyze a mesh-dependent boundary value problem
 in Section~\ref{sec:BVP} that plays an important role in the {\em a posteriori} error analysis
 carried out in Section~\ref{sec:Reliability} and Section~\ref{sec:Efficiency}.
 An adaptive algorithm motivated by the {\em a posteriori} error analysis is
 introduced in Section~\ref{sec:Adaptive} and we report results of several
 numerical experiments
 in Section~\ref{sec:Numerics}.  We end the paper with some concluding remarks in Section~\ref{sec:Conclusions}.
\section{$C^0$ Interior Penalty Methods}\label{sec:C0IP}
 Let $\cT_h$ be a triangulation of $\O$, $\cV_h$ be the set of the vertices of $\cT_h$,
 $\cE_h$ be the set of the edges of $\cT_h$, and $V_h\subset H^1_0(\O)$
 be the $P_k$ Lagrange finite element space ($k\geq2$)
 associated with $\cT_h$.  The discrete problem for the $C^0$ interior
 penalty method \cite{BSZZ:2012:Kirchhoff,BSZ:2012:Obstacle} is to find
\begin{equation}\label{eq:C0IP}
   u_h=\argmin_{v\in K_h}\Big[\frac12 a_h(v,v)-(f,v)\Big],
\end{equation}
 where $K_h=\{v\in V_h:\, v(p)\geq\psi(p)\quad\text{for all}\;p\in \cV_h\}$,
\begin{align*}
 a_h(w,v)&=\TSum \int_TD^2w:D^2v\,dx+\ESum\int_e\Big(\Mean{w}\Jump{v} +\Mean{v}\Jump{w}\Big)ds\\
    &\hspace{40pt}+\ESum \frac{\sigma}{|e|}\int_e\Jump{w}\Jump{v}\,ds,
\end{align*}
 $\{\hspace{-3.5pt}\{\cdot\}\hspace{-3.5pt}\}$ denotes the average across an edge,
 $[\hspace{-1.5pt}[\cdot]\hspace{-1.5pt}]$ denotes the jump across an edge, $|e|$ is the length of
 the edge $e$,
 and $\sigma\geq1$ is a penalty parameter large enough so that $a_h(\cdot,\cdot)$ is positive-definite
  on $V_h$.  Details for the notation and the choice of $\sigma$ can be found in
  \cite{BSung:2005:DG4,JSY:2014:C0IP}.
\par
  The unique solution $u_h\in K_h$ of \eqref{eq:C0IP} is characterized by the variational
 inequality
\begin{equation*}
  a_h(u_h,v-u_h)\geq (f,v-u_h)\qquad\forall\,v\in K_h,
\end{equation*}
 which can be expressed in the following equivalent complementarity form:
\begin{equation}\label{eq:DiscreteComplentarityCondition}
 \sum_{p\in\cV_h}\lambda_h(p)\big(u_h(p)-\psi(p)\big)=0,
\end{equation}
 where the Lagrange multipliers $\lambda_h(p)$ are defined by
\begin{equation}\label{eq:lambdahDef}
  a_h(u_h,v)=(f,v)+\sum_{p\in\cV_h}\lambda_h(p)v(p)\qquad\forall\,v\in V_h
\end{equation}
 and satisfy
\begin{equation}\label{eq:SignOflambdah}
  \lambda_h(p)\geq0 \qquad\forall\,p\in\cV_h.
\end{equation}
\par
 We also use $\lambda_h$ to denote the measure $\sum_{p\in\cV_h}\lambda_h(p)\delta_p$,
  where $\delta_p$ is the Dirac point measure at $p$.  The equation \eqref{eq:lambdahDef} can therefore
   be written as
\begin{equation}\label{eq:AlternativelambdahDef}
  a_h(u_h,v)=(f,v)+\int_\O v\,d\lambda_h\qquad\forall\,v\in V_h.
\end{equation}
\begin{remark}\label{rem:2And4}
  For second order obstacle problems, the discrete Lagrange multiplier cannot be extended to
  $H^{-1}(\O)$ as a sum of Dirac point measures since such measures do not belong to $H^{-1}(\O)$.
  Consequently there are different choices for extending the discrete Lagrange multiplier
  to $H^{-1}(\O)$ \cite{Veeser:2001:Obstacle,NSV:2003:Obstacle,NSV:2005:Contact}.
  The fact that the Lagrange multiplier for the discrete fourth order obstacle problem can be expressed
  naturally as a sum of Dirac point measures leads to the simple {\em a posteriori}
  error analysis in Section~\ref{sec:Reliability} and
   Section~\ref{sec:Efficiency}.
\end{remark}
\begin{remark}\label{rem:lambdah}
 We can also treat $\lambda_h$ as a member of $H^{-2}(\O)=[H^2_0(\O)]'$ such that
\begin{equation*}
  \langle\lambda_h,v\rangle=\int_\O v\,d\lambda_h=\sum_{p\in\cV_p}\lambda_h(p)v(p) \qquad\forall\,v\in H^2_0(\O).
\end{equation*}
\end{remark}
\par
 Let the mesh-dependent norm $\|\cdot\|_h$ be defined by
\begin{equation}\label{eq:hNormDef}
 \|v\|_h^2=\TSum |v|_{H^2(T)}^2+\ESum\frac{\sigma}{|e|}\|\jump{v}\|_{L_2(e)}^2.
\end{equation}
 Note that
\begin{equation}\label{eq:SameNorm}
  \|v\|_h=|v|_{H^2(\O)} \qquad\forall\,v\in \ES.
\end{equation}
\par
 The following {\em a priori} error estimate is known \cite{BSZ:2012:Obstacle,BSZZ:2012:Kirchhoff}:
\begin{equation}\label{eq:APriori}
  \|u-u_h\|_h\leq Ch^\alpha,
\end{equation}
 where the index of elliptic regularity $\alpha\in(\frac12,1]$ is determined by the interior angles
 of $\O$ and can be taken to be $1$ if $\O$ is convex.
\par
 Our goal is to develop {\em a posteriori} error estimates for $\|u-u_h\|_h$.
\par
 Two useful tools for the analysis of $C^0$ interior penalty methods are the
 nodal interpolation operator $\Pi_h:\ES\longrightarrow V_h$ and an enriching operator
 $E_h:V_h\longrightarrow W_h\subset H^2_0(\O)$, where $W_h$ is the Hsieh-Clough-Tocher macro finite element space \cite{Ciarlet:1974:HCT}.
 \par
   The operator $E_h$ is defined by averaging
 (cf. \cite[Section~4.1]{Brenner:2012:C0IP})
 and hence
\begin{equation}\label{eq:Invariance}
  (E_hu_h)(p)=u_h(p)\quad\text{for all}\; p\in\cV_h.
\end{equation}
 The following estimate can be found in the proof of \cite[Lemma~1]{Brenner:2012:C0IP}.
\begin{equation}\label{eq:EhFundamentalEstimate}
  \hT^{-4}\|v-E_hv\|_{L_2(T)}^2\leq C\sum_{e\in\tilde\cE_T}
   \frac{1}{|e|}\|\jump{v}\|_{L_2(e)}^2 \qquad\forall\,T\in\cT_h,
\end{equation}
 where $\tilde\cE_T$ is the set of the edges of $\cT_h$ emanating from the vertices of $T$, and
 the positive constant $C$ depends only on $k$ and the shape regularity of $\cT_h$.
\par
 From \eqref{eq:EhFundamentalEstimate} and standard inverse estimates \cite{Ciarlet:1978:FEM,BScott:2008:FEM}, we also have
\begin{alignat}{3}
  \hT^{-2}\|v-E_hv\|_{L_\infty(T)}^2&\leq C\sum_{e\in\tilde\cE_T}
   \frac{1}{|e|}\|\jump{v}\|_{L_2(e)}^2 &\qquad&\forall\,T\in\cT_h,\label{eq:EhLInfty}\\
  \TSum|v-E_hv|_{H^2(T)}^2&\leq C\ESum\frac{1}{|e|}\|\jump{v}\|_{L_2(e)}^2&\qquad&\forall\,v\in V_h,
   \label{eq:EhEnergyEst}\\
   \|v-E_hv\|_h^2&\leq C\ESum\frac{\sigma}{|e|}\|\jump{v}\|_{L_2(e)}^2&\qquad&\forall\,v\in V_h,
   \label{eq:EhhNormEst}
\end{alignat}
 where the positive constant $C$ depends only on $k$ and the shape regularity of $\cT_h$.
\section{A Mesh-Dependent Boundary Value Problem}\label{sec:BVP}
  Let $z_h\in H^2_0(\O)$ be defined by
\begin{equation}\label{eq:zhDef}
  a(z_h,v)=(f,v)+\int_\O v\,d\lambda_h=(f,v)+\sum_{p\in\cV_h}
  \lambda_h(p)v(p)\qquad\forall\,v\in H^2_0(\O).
\end{equation}
 Then $u_h$ is the approximate solution of \eqref{eq:zhDef}
  obtained by the $C^0$ interior penalty method.
\begin{remark}\label{rem:Braess}
  The idea of considering such mesh-dependent boundary value problems was introduced in
 \cite{Braess:2005:Obstacle} for second order obstacle problems.
\end{remark}
\par
 A residual based error estimator \cite{BGS:2010:C0IP,Brenner:2012:C0IP} for $u_h$  (as an approximate solution of \eqref{eq:zhDef})
 is given by
\begin{equation}\label{eq:BVPEstimator}
 \eta_h=\Big(\ESum\etaeO^2+\ESumI (\etaeTwo^2+\etaeThree^2)+\TSum\etaT^2\Big)^\frac12,
\end{equation}
 where $\cE_h^i$ is the set of the edges of $\cT_h$ interior to $\O$,
\begin{align}
 \etaeO&=\frac{\sigma}{|e|^\frac12}\|\jump{u_h}\|_{L_2(e)},\label{eq:etae1Def}\\
 \etaeTwo&=|e|^\frac12\|\jumpTwo{u_h}\|_{L_2(e)},\label{eq:etae2Def}\\
 \etaeThree&=|e|^\frac32\|\jumpThree{u_h}\|_{L_2(e)},\label{eq:etae3Def}\\
 \etaT&=\hT^2\|f-\Delta^2 u_h\|_{L_2(T)}.\label{eq:etaTDef}
\end{align}
\par
 The following result will play an important role in the {\em a posteriori} error analysis
 of the obstacle problem.   Note that its proof is made simple by the representation of
 the discrete Lagrange multiplier $\lambda_h$ as a sum of Dirac point measures supported
 at the vertices of $\cT_h$, which allows the analysis in \cite{Brenner:2012:C0IP}
 to be used here.
\begin{lemma}\label{lem:ZhReliability}
  There exists a positive constant $C$, depending only on $k$ and
  the shape regularity of $\cT_h$, such that
\begin{equation}\label{eq:zhuhEstimate}
  \|z_h-u_h\|_h\leq C\eta_h.
\end{equation}
\end{lemma}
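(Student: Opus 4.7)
The plan is to split $z_h-u_h=(z_h-E_h u_h)+(E_h u_h-u_h)$ and handle the two parts separately. The second part is dispatched immediately: since $E_h u_h\in H^2_0(\O)$, estimate \eqref{eq:EhhNormEst} combined with the definition of $\etaeO$ gives $\|u_h-E_h u_h\|_h\leq C\eta_h$ (note $\sigma\geq1$).

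For the first part, I would set $w=z_h-E_h u_h\in H^2_0(\O)$, so that by \eqref{eq:SameNorm}, $\|w\|_h^2=|w|_{H^2(\O)}^2=a(w,w)$. Expanding and invoking \eqref{eq:zhDef} yields
\begin{equation*}
  |w|_{H^2(\O)}^2=(f,w)+\sum_{p\in\cV_h}\lambda_h(p)w(p)-a(E_h u_h,w).
\end{equation*}
Since $E_h u_h,w\in H^2_0(\O)$, all interior-edge jump and mean contributions of $a_h$ vanish, so $a(E_h u_h,w)=a_h(E_h u_h,w)$. The nodal sum $\sum_p\lambda_h(p)w(p)$ is the only obstruction to directly applying the BVP residual analysis of \cite{Brenner:2012:C0IP}; to remove it I would test \eqref{eq:AlternativelambdahDef} with $\Pi_h w\in V_h$. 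Because $\Pi_h$ is nodal, $(\Pi_h w)(p)=w(p)$ at every $p\in\cV_h$, and hence
\begin{equation*}
  \sum_{p\in\cV_h}\lambda_h(p)w(p)=a_h(u_h,\Pi_h w)-(f,\Pi_h w).
\end{equation*}
Substituting and regrouping produces the key identity
\begin{equation*}
  |w|_{H^2(\O)}^2=\bigl[(f,w-\Pi_h w)-a_h(u_h,w-\Pi_h w)\bigr]+a_h(u_h-E_h u_h,w).
\end{equation*}

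The second bracket is controlled by $\|u_h-E_h u_h\|_h\|w\|_h\leq C\eta_h|w|_{H^2(\O)}$ using \eqref{eq:EhhNormEst} and the continuity of $a_h$ in the mesh-dependent norm. The first bracket is exactly the standard $C^0$ interior penalty residual applied to $\phi:=w-\Pi_h w$: element-wise integration by parts on $a_h(u_h,\phi)$ produces the volume residual $f-\Delta^2 u_h$ on each $T\in\cT_h$ together with the interior-edge jumps $\jumpTwo{u_h}$ and $\jumpThree{u_h}$, and pairing these with standard interpolation estimates for $\Pi_h$ (powers of $\hT$ against $L_2$-norms on triangles and powers of $|e|$ against $L_2$-norms on edges) yields
\begin{equation*}
  \bigl|(f,w-\Pi_h w)-a_h(u_h,w-\Pi_h w)\bigr|\leq C\eta_h|w|_{H^2(\O)}.
\end{equation*}
Dividing through by $|w|_{H^2(\O)}$ and combining with the bound on $\|E_h u_h-u_h\|_h$ gives \eqref{eq:zhuhEstimate}.

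The main technical burden is the residual bound in the last display, which requires the careful integration-by-parts bookkeeping and interpolation-error estimates developed in \cite{Brenner:2012:C0IP}. The decisive conceptual step, however, is the use of $\Pi_h w$ as the discrete test function: this exploits the fact emphasized in Remark~\ref{rem:2And4} that $\lambda_h$ is a sum of Dirac point measures supported at the vertices of $\cT_h$, precisely the set on which $\Pi_h$ interpolates exactly, so that the Lagrange-multiplier contribution is absorbed into the standard BVP residual with no loss.
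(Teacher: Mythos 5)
Your strategy is essentially the paper's: split off $E_hu_h$, control $\|u_h-E_hu_h\|_h$ by \eqref{eq:EhhNormEst}, and use the fact that $\Pi_h$ reproduces values at the vertices of $\cT_h$ to cancel the $\lambda_h$-sum against \eqref{eq:AlternativelambdahDef} tested with $\Pi_h w$, after which the estimate reduces to the residual analysis of the boundary value problem in Brenner (2012). (That the paper runs the last step through the duality representation of $|z_h-E_hu_h|_{H^2(\O)}$ rather than testing with $w$ itself, and bounds the jump part of $\|z_h-u_h\|_h$ directly by $\etaeO$, are cosmetic differences.) The decisive idea you identify --- $\lambda_h$ is a sum of point masses at the vertices, exactly where $\Pi_h$ interpolates --- is indeed the crux.

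There is, however, a flaw in the way you regroup and then estimate the two brackets $\bigl[(f,w-\Pi_h w)-a_h(u_h,w-\Pi_h w)\bigr]$ and $a_h(u_h-E_hu_h,w)$ separately. The bilinear form $a_h$ contains the consistency terms with averages of second normal derivatives, so $a_h(u_h,w-\Pi_h w)$ contains $\int_e\{\hspace{-3.5pt}\{\p^2 w/\p n^2\}\hspace{-3.5pt}\}\,\jump{u_h}\,ds$ and $a_h(u_h-E_hu_h,w)$ contains the same quantity (since $\jump{E_hu_h}=0$), with opposite signs. For $w=z_h-E_hu_h$, where $z_h$ is only known to lie in $H^2_0(\O)$ (it solves a biharmonic problem with Dirac measure data), these edge averages of $\p^2w/\p n^2$ need not be defined as $L_2$ traces, and in any case they are not controlled by $\|w\|_h=|w|_{H^2(\O)}$; in particular the claimed bound of the second bracket by ``continuity of $a_h$ in the mesh-dependent norm'' is not available, since $\|\cdot\|_h$ carries no control of the averages. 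Your overall identity is formally correct only because these ill-behaved terms cancel between the two brackets, so they must not be split apart. The repair is precisely the paper's bookkeeping (equation (7.9) of Brenner 2012): keep the element-wise terms $\TSum\int_T D^2(u_h-E_hu_h):D^2w\,dx$ and $\TSum\int_T D^2u_h:D^2(w-\Pi_h w)\,dx$ together with the fully discrete combination $a_h(u_h,\Pi_h w)-\TSum\int_T D^2u_h:D^2(\Pi_h w)\,dx$, so that averages of derivatives of $w$ never appear, and then invoke the estimates (7.10)--(7.19) there, which also supply the $\etaeO$-type contributions your sketch of the ``integration-by-parts'' step omits. With that regrouping your argument coincides with the paper's proof.
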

\begin{proof}  We have an obvious estimate
\begin{equation}\label{eq:zhuhEst1}
  \ESum\frac{\sigma}{|e|}\left\|\Jump{(z_h-u_h)}\right\|_{L_2(e)}^2
    =\ESum\frac{\sigma}{|e|}\left\|\Jump{u_h}\right\|_{L_2(e)}^2\leq \ESum \etaeO^2,
\end{equation}
 and it only remains to estimate $\TSum |z_h-u_h|_{H^2(T)}^2$.
\par
 Let $E_h:V_h\longrightarrow \ES$ be the enriching operator.  It follows from
 \eqref{eq:EhEnergyEst} and \eqref{eq:etae1Def} that
\begin{align}\label{eq:zhuhEst2}
  \TSum |z_h-u_h|_{H^2(T)}^2&\leq 2\TSum\big[
  |z_h-E_hu_h|_{H^2(T)}^2+|u_h-E_hu_h|_{H^2(T)}^2\big]\\
     &\leq 2|z_h-E_hu_h|_{H^2(\O)}^2+C\ESum\etaeO^2,\notag
\end{align}
 and, by duality,
\begin{equation}\label{eq:zhuhEst3}
  |z_h-E_hu_h|_{H^2(\O)}=\sup_{\phi\in\ES\setminus\{0\}}\frac{a(z_h-E_hu_h,\phi)}{|\phi|_{H^2(\O)}}.
\end{equation}
\par
 In view of \eqref{eq:lambdahDef} and \eqref{eq:zhDef},
 the numerator on the right-hand side of \eqref{eq:zhuhEst3}
 becomes
\begin{align*}
  &a(z_h-E_hu_h,\phi)=\TSum \int_T D^2(z_h-E_hu_h):D^2\phi\,dx\notag\\
     &\hspace{40pt}=(f,\phi)+\sum_{p\in\cV_h}\lambda_h(p)\phi(p)\notag\\
     &\hspace{70pt}+\TSum \int_T D^2(u_h-E_hu_h):D^2\phi\,dx
     -\TSum\int_T D^2u_h:D^2(\phi-\Pi_h\phi)\,dx\\
     &\hspace{100pt}-\TSum\int_T D^2u_h:D^2(\Pi_h\phi)\,dx\\
     &\hspace{40pt}=(f,\phi)+\sum_{p\in\cV_h}\lambda_h(p)\phi(p)-(f,\Pi_h\phi)
       -\sum_{p\in\cV_h}\lambda_h(p)(\Pi_h\phi)(p)+a_h(u_h,\Pi_h\phi)\notag\\
     &\hspace{70pt}+\TSum \int_T D^2(u_h-E_hu_h):D^2\phi\,dx
     -\TSum\int_T D^2u_h:D^2(\phi-\Pi_h\phi)\,dx\\
     &\hspace{100pt}-\TSum\int_T D^2u_h:D^2(\Pi_h\phi)\,dx.
\end{align*}
 Since $\phi$ and $\Pi_h\phi$ agree on the vertices of $\cT_h$, the two terms involving
 $\lambda_h$ cancel each other and we end up with
\begin{align*}
  a(z_h-E_hu_h,\phi)
  =&\TSum \int_T D^2(u_h-E_hu_h):D^2\phi\,dx
     -\TSum\int_T D^2u_h:D^2(\phi-\Pi_h\phi)\,dx\\
      &\hspace{40pt}-\TSum\int_T D^2u_h:D^2(\Pi_h\phi)\,dx+a_h(u_h,\Pi_h\phi)+(f,\phi-\Pi_h\phi),\notag
\end{align*}
 which is precisely the equation \cite[$(7.9)$]{Brenner:2012:C0IP} (and
 which has nothing to do with either
 $z_h$ or $\lambda_h$).
\par
 It then follows from the estimates \cite[$(7.10)-(7.19)$]{Brenner:2012:C0IP} that
\begin{equation}\label{eq:zhuhEst4}
  a(u-E_hu_h,\phi)\leq C\eta_h|\phi|_{H^2(\O)}.
\end{equation}
\par
 The estimate \eqref{eq:zhuhEstimate} follows from \eqref{eq:hNormDef} and
  \eqref{eq:zhuhEst1}--\eqref{eq:zhuhEst4}.
\end{proof}
%
\section{Reliability Estimates for the Obstacle Problem}\label{sec:Reliability}
 We begin with a simple estimate.
\begin{lemma}\label{lem:UpperBdd}
 There exists a positive constant $C$, depending only on $k$ and
 the shape regularity of $\cT_h$,  such that
\begin{equation}\label{eq:UpperBdd}
 \|u-u_h\|_h+\|\lambda-\lambda_h\|_{H^{-2}(\O)}\leq C\eta_h+\sqrt{\int_\O (\psi-E_hu_h)^+d\lambda}\;.
\end{equation}
\end{lemma}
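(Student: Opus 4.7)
The first move is to identify $\|\lambda-\lambda_h\|_{H^{-2}(\O)}$ with something computable. Subtracting \eqref{eq:lambdaDef} from \eqref{eq:zhDef} gives $a(u-z_h,v)=\langle\lambda-\lambda_h,v\rangle$ for every $v\in\ES$, and since $a(\cdot,\cdot)$ is the inner product inducing $|\cdot|_{H^2(\O)}$, duality yields $\|\lambda-\lambda_h\|_{H^{-2}(\O)}=|u-z_h|_{H^2(\O)}=\|u-z_h\|_h$ by \eqref{eq:SameNorm}. Combined with the triangle inequality $\|u-u_h\|_h\le\|u-z_h\|_h+\|z_h-u_h\|_h$ and Lemma~\ref{lem:ZhReliability}, the whole estimate reduces to bounding $|u-z_h|_{H^2(\O)}$ by $C\eta_h+\sqrt{\int_\O(\psi-E_hu_h)^+d\lambda}$.

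\textbf{Energy identity for $u-z_h$.} Testing the difference of \eqref{eq:lambdaDef} and \eqref{eq:zhDef} against $u-z_h$ itself gives
\begin{equation*}
  |u-z_h|_{H^2(\O)}^2 \;=\; \int_\O (u-z_h)\,d\lambda \;-\; \int_\O (u-z_h)\,d\lambda_h.
\end{equation*}
The key algebraic step is to insert the enriched iterate $E_hu_h\in\ES$ and split $u-z_h=(u-E_hu_h)+(E_hu_h-z_h)$ in both integrals. The $(E_hu_h-z_h)$--piece collapses back to $a(u-z_h,E_hu_h-z_h)$, which by Cauchy--Schwarz and Young's inequality is absorbed into $\tfrac12|u-z_h|_{H^2(\O)}^2+C\|E_hu_h-z_h\|_h^2$; the latter is $O(\eta_h^2)$ because $\|E_hu_h-z_h\|_h\le\|E_hu_h-u_h\|_h+\|u_h-z_h\|_h$, controlled by \eqref{eq:EhhNormEst}, \eqref{eq:etae1Def} and Lemma~\ref{lem:ZhReliability}.

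\textbf{The obstacle contribution.} What remains is $\int_\O (u-E_hu_h)(d\lambda-d\lambda_h)$, and this is where the two complementarity conditions enter, exploiting that $E_hu_h(p)=u_h(p)$ at every $p\in\cV_h$ by \eqref{eq:Invariance}. For the continuous measure, \eqref{eq:ComplementarityCondition} gives
\begin{equation*}
  \int_\O (u-E_hu_h)\,d\lambda=\int_\O(u-\psi)\,d\lambda+\int_\O(\psi-E_hu_h)\,d\lambda=\int_\O(\psi-E_hu_h)\,d\lambda\le\int_\O(\psi-E_hu_h)^+d\lambda.
\end{equation*}
For the discrete measure, \eqref{eq:DiscreteComplentarityCondition} combined with $E_hu_h(p)=u_h(p)$ gives $\int_\O E_hu_h\,d\lambda_h=\sum_p\lambda_h(p)u_h(p)=\sum_p\lambda_h(p)\psi(p)=\int_\O\psi\,d\lambda_h$, so
\begin{equation*}
  \int_\O (u-E_hu_h)\,d\lambda_h=\sum_{p\in\cV_h}\lambda_h(p)\bigl(u(p)-\psi(p)\bigr)\ge0,
\end{equation*}
where nonnegativity uses $u\ge\psi$ pointwise (meaningful because $H^2_0(\O)\hookrightarrow C(\bar\O)$ in 2D) and \eqref{eq:SignOflambdah}. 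Subtracting, the crossed integral is bounded above by $\int_\O(\psi-E_hu_h)^+d\lambda$.

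\textbf{Assembly.} Substituting these bounds back, absorbing the $\tfrac12|u-z_h|_{H^2(\O)}^2$ term, taking square roots and invoking Lemma~\ref{lem:ZhReliability} yields \eqref{eq:UpperBdd}. The main conceptual point (rather than an obstacle) is the transfer of the discrete complementarity from $u_h$ to its $H^2_0$--conforming enrichment $E_hu_h$ via the vertex--preserving property \eqref{eq:Invariance}; this is precisely what makes the ``discrete-side'' term sign-favorable and leaves only the single positive-part residual $\int_\O(\psi-E_hu_h)^+d\lambda$ on the right.
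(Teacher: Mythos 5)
Your proof is correct, and it uses exactly the same ingredients as the paper's: the auxiliary problem $z_h$ together with Lemma~\ref{lem:ZhReliability}, the enriching operator $E_h$ with its vertex invariance \eqref{eq:Invariance} and the estimates \eqref{eq:SameNorm}, \eqref{eq:EhhNormEst}, and the two complementarity conditions \eqref{eq:ComplementarityCondition}, \eqref{eq:DiscreteComplentarityCondition}, which generate the term $\int_\O(\psi-E_hu_h)^+\,d\lambda$ in precisely the same way as in \eqref{eq:UpperBdd3}. The only genuine difference is organizational: you run the energy argument on $u-z_h$, via the identity $a(u-z_h,u-z_h)=\int_\O(u-z_h)\,d(\lambda-\lambda_h)$, and absorb the cross term $a(u-z_h,E_hu_h-z_h)$ with Young's inequality, whereas the paper squares $u-E_hu_h$, splits $a(u-E_hu_h,u-E_hu_h)=a(u-z_h,u-E_hu_h)+a(z_h-E_hu_h,u-E_hu_h)$ and divides by $|u-E_hu_h|_{H^2(\O)}$. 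Your arrangement has the small advantage that $\|\lambda-\lambda_h\|_{H^{-2}(\O)}=|u-z_h|_{H^2(\O)}$ drops out directly rather than through $\|u-u_h\|_h$ as in \eqref{eq:LMEst}; the only cost is the numerical factor in front of $\sqrt{\int_\O(\psi-E_hu_h)^+\,d\lambda}$ (your assembly yields roughly $2\sqrt{2}$ instead of the stated $1$), which is immaterial — the paper's own summation of the displacement and multiplier bounds likewise produces a factor $2$ that is silently absorbed, and only the structure of the bound is used in Theorem~\ref{thm:ReliableII} and Lemma~\ref{lem:AsymptoticConvergenceRate}.
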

\begin{proof}
  Let $E_h:V_h\longrightarrow H^2_0(\O)$ be the enriching operator.
   We can write
\begin{align}\label{eq:UpperBdd1}
  |u-E_hu_h|_{H^2(\O)}^2&=a(u-E_hu_h,u-E_hu_h)\\
      &=a(u-z_h,u-E_hu_h)+a(z_h-E_hu_h,u-E_hu_h),\notag
\end{align}
 and, in view of  \eqref{eq:SameNorm},
 \eqref{eq:EhhNormEst}, \eqref{eq:etae1Def} and Lemma~\ref{lem:ZhReliability},
 the second term on the right-hand side of \eqref{eq:UpperBdd1} is bounded by
\begin{align}\label{eq:UpperBdd2}
  a(z_h-E_hu,u-E_hu_h)&\leq |z_h-E_hu_h|_{H^2(\O)}|u-E_hu_h|_{H^2(\O)}\notag\\
    &\leq \big(\|z_h-u_h\|_h+\|u_h-E_hu_h\|_h\big)|u-E_hu_h|_{H^2(\O)}\\
    &\leq C\eta_h|u-E_hu_h|_{H^2(\O)}.\notag
\end{align}
\par
 By \eqref{eq:KDef}--\eqref{eq:lambdaDef}, \eqref{eq:DiscreteComplentarityCondition},
 \eqref{eq:SignOflambdah},
  \eqref{eq:Invariance} and \eqref{eq:zhDef},
 the first term on the right-hand side of \eqref{eq:UpperBdd1} can be bounded as follows:
\begin{align}\label{eq:UpperBdd3}
  &a(u-z_h,u-E_hu_h)=\int_\O (u-E_hu_h)\,d\lambda-
     \sum_{p\in\cV_h}\lambda_h(p)\big(u(p)-(E_hu_h)(p)\big)\\
     &\hspace{40pt}=\int_\O (\psi-E_hu_h)\,d\lambda-
     \sum_{p\in\cV_h}\lambda_h(p)\big(u(p)-\psi(p)\big)\leq \int_\O (\psi-E_hu_h)^+\,d\lambda. \notag
\end{align}
\par
 It follows from \eqref{eq:SameNorm} and \eqref{eq:UpperBdd1}--\eqref{eq:UpperBdd3} that
\begin{equation*}
 \|u-E_hu_h\|_h\leq C\eta_h+\sqrt{\int_\O (\psi-E_hu_h)^+d\lambda}\;,
\end{equation*}
 which together with \eqref{eq:EhhNormEst} implies
\begin{equation}\label{eq:UpperBdd4}
 \|u-u_h\|_h\leq C\eta_h+\sqrt{\int_\O (\psi-E_hu_h)^+d\lambda}\;.
\end{equation}
\par
   In order to estimate $\|\lambda-\lambda_h\|_{H^{-2}(\O)}$, we observe that
   \eqref{eq:lambdaDef}, \eqref{eq:SameNorm} and \eqref{eq:zhDef}
   imply
\begin{align}\label{eq:LMEst}
  \|\lambda-\lambda_h\|_{H^{-2}(\O)}&=\sup_{v\in H^2_0(\O)}\frac{\d\int_\O v\,d(\lambda-\lambda_h)}
   {|v|_{H^2(\O)}}\\
   &=\sup_{v\in H^2_0(\O)}\frac{a(u-z_h,v)}
   {|v|_{H^2(\O)}}
   = |u-z_h|_{H^2(\O)}\
  \leq \|u-u_h\|_h+\|z_h-u_h\|_h.\notag
\end{align}
 The estimate  for $\|\lambda-\lambda_h\|_{H^{-2}(\O)}$ then follows from Lemma~\ref{lem:ZhReliability}
  and \eqref{eq:UpperBdd4}.
\end{proof}
\par
 We can also remove the inconvenient $E_h$ in the estimate \eqref{eq:UpperBdd}.
\begin{theorem}\label{thm:ReliableII}
 There exists a positive constant $C$, depending only on $k$ and
 the shape regularity of $\cT_h$, such that
\begin{align}\label{eq:ReliableII}
\|u-u_h\|_h+\|\lambda-\lambda_h\|_{H^{-2}(\O)}
 &\leq C\Big(\eta_h+|\lambda|^\frac12\sqrt{\max_{T\in\cT_h}\hT\sum_{ e\in\tilde\cE_T}|e|^{-1/2}\|\jump{u_h}\|_{L_2(e)}}\,\Big)\\
   &\hspace{40pt}+ |\lambda|^\frac12\|(\psi-u_h)^+\|_{L_\infty(\O)}^\frac12,\notag
\end{align}
 where $\tilde\cE_T$ is the set of the edges in $\cT_h$ that emanate from the vertices of $T$.
\end{theorem}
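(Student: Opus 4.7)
The plan is to build directly on Lemma~\ref{lem:UpperBdd}, so the only genuine work is to massage the term $\sqrt{\int_\O (\psi-E_hu_h)^+\,d\lambda}$ into a form that no longer involves the enriching operator $E_h$. Since $\lambda$ is a finite nonnegative Borel measure (of total mass $|\lambda|$) whose support, by Remark~\ref{rem:SupportOfLambda}, lies in the interior of $\O$, the strategy is to split $\psi-E_hu_h = (\psi-u_h)+(u_h-E_hu_h)$, dominate $(\psi-E_hu_h)^+$ by $(\psi-u_h)^+ + |u_h-E_hu_h|$, and then integrate against $\lambda$.

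First I would note the elementary pointwise inequality
\begin{equation*}
 (\psi-E_hu_h)^+ \leq (\psi-u_h)^+ + |u_h-E_hu_h|
\end{equation*}
and integrate it against $d\lambda$. The first resulting term is handled crudely by
\begin{equation*}
 \int_\O (\psi-u_h)^+\,d\lambda \leq |\lambda|\,\|(\psi-u_h)^+\|_{L_\infty(\O)}.
\end{equation*}
For the second, I would bound $\int_\O |u_h-E_hu_h|\,d\lambda \leq |\lambda|\,\max_{T\in\cT_h}\|u_h-E_hu_h\|_{L_\infty(T)}$ and then invoke \eqref{eq:EhLInfty} to get, for each $T$,
\begin{equation*}
 \|u_h-E_hu_h\|_{L_\infty(T)} \leq C\hT\Big(\sum_{e\in\tilde\cE_T}\frac{1}{|e|}\|\jump{u_h}\|_{L_2(e)}^2\Big)^{\!1/2}.
\end{equation*}

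The only slightly non-routine step is reconciling this bound with the $\ell^1$-style expression that appears inside the square root in the statement of the theorem. This is achieved by the trivial inequality $(\sum_j a_j^2)^{1/2}\leq \sum_j |a_j|$, which yields
\begin{equation*}
 \|u_h-E_hu_h\|_{L_\infty(T)} \leq C\hT\sum_{e\in\tilde\cE_T}|e|^{-1/2}\|\jump{u_h}\|_{L_2(e)}.
\end{equation*}
Combining these bounds with $\sqrt{a+b}\leq\sqrt a+\sqrt b$ for $a,b\geq 0$ converts the $E_h$-dependent term from Lemma~\ref{lem:UpperBdd} into the sum of the two $|\lambda|^{1/2}$ terms that appear on the right-hand side of \eqref{eq:ReliableII}, finishing the proof.

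The main (very mild) obstacle is picking the right elementary inequality on the positive part: one must be careful not to use $(\psi-E_hu_h)^+\leq(\psi-u_h)^+ + (u_h-E_hu_h)^+$, since the latter positive part cannot be controlled pointwise by \eqref{eq:EhLInfty}; replacing it by the absolute value $|u_h-E_hu_h|$ is what makes the $L_\infty$-estimate on each element applicable. Everything else is an application of Lemma~\ref{lem:UpperBdd}, the standard $L_\infty$ enrichment estimate \eqref{eq:EhLInfty}, and the two elementary scalar inequalities noted above.
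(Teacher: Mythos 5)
Your proposal is correct and follows essentially the same route as the paper: both reduce to Lemma~\ref{lem:UpperBdd}, split $(\psi-E_hu_h)^+\le(\psi-u_h)^++|u_h-E_hu_h|$, bound the integral against $\lambda$ by $L_\infty$ norms times $|\lambda|$, and control $\|u_h-E_hu_h\|_{L_\infty}$ via \eqref{eq:EhLInfty} together with the $\ell^2$-to-$\ell^1$ and $\sqrt{a+b}\le\sqrt a+\sqrt b$ inequalities. No gaps.
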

\begin{proof}  We have
\begin{equation}\label{eq:uReliable1}
 \int_\O  (\psi-E_hu_h)^+\,d\lambda \leq
  \big[\|(\psi-u_h)^+\|_{L_\infty(\O)}+\|u_h-E_hu_h\|_{L_\infty(\O)}\big]|\lambda|,
\end{equation}
 and, by \eqref{eq:EhLInfty},
\begin{equation}\label{eq:uReliable2}
  \|u_h-E_hu_h\|_{L_\infty(\O)}\leq C\max_{T\in\cT_h}\hT\sum_{ e\in\tilde\cE_T}|e|^{-1/2}\|\jump{u_h}\|_{L_2(e)}.
\end{equation}
\par
 The estimate \eqref{eq:ReliableII} follows from  \eqref{eq:UpperBdd},  \eqref{eq:uReliable1}, and \eqref{eq:uReliable2}.
\end{proof}
\begin{remark}\label{rem:ReliableII}
 The estimate \eqref{eq:ReliableII} is {not} a genuine {\em a posteriori} error estimate since $|\lambda|$ is not known.
 But it is useful for monitoring the asymptotic convergence of adaptive algorithms
 (cf. Lemma~\ref{lem:AsymptoticConvergenceRate} and Lemma~\ref{lem:LMTest}).
\end{remark}
\begin{remark}\label{rem:Genuine}
  Under the stronger assumption $\psi\in C^2(\bar\O)$ on the obstacle function,
  one can also obtain a genuine
 {\em a posteriori} error estimate by replacing $|\lambda|$ with a computable bound.
\par
  Indeed, for any $w\in K$, we have
\begin{equation*}
  \frac12|u|_{H^2(\O)}^2\leq\frac12 |w|_{H^2(\O)}^2-(f,w)+(f,u)
  \leq \frac12 |w|_{H^2(\O)}^2-(f,w)+C\|f\|_{L_2(\O)}^2+\frac14|u|_{H^2(\O)}^2,
\end{equation*}
 by a Poincar\'e-Friedrichs inequality \cite{Necas:2012:Direct} and the arithmetic-geometric means inequality,
 and hence
\begin{equation}\label{eq:uH2Bdd}
  |u|_{H^2(\O)}^2\leq 2|w|_{H^2(\O)}^2-4(f,w)+C\|f\|_{L_2(\O)}^2,
\end{equation}
 where $C$ is a computable positive constant.  Combining \eqref{eq:uH2Bdd} with the Sobolev
 embedding (cf. \cite{ADAMS:2003:Sobolev})
   $H^2(\O)\hookrightarrow C^{0,\gamma}(\O)$
 for any $\gamma<1$, we see that there is a computable $\delta>0$ such that
 $u(x)>\psi(x)$ if the distance from $x$ to $\p\O$ is $<\delta$.
 Therefore there is a computable $\phi\in C^\infty_c(\O)$ such that
 $\phi=1$ on the support of $\lambda$.
\par
 We then have, in view of \eqref{eq:lambdaDef} and \eqref{eq:uH2Bdd},
\begin{equation*}
  |\lambda|=a(u,\phi)-(f,\phi)\leq |u|_{H^2(\O)}|\phi|_{H^2(\O)}+\|f\|_{L_2(\O)}\|\phi\|_{L_2(\O)}\leq C,
 \end{equation*}
 where the positive constant $C$ is computable.
\end{remark}
%
\section{Efficiency Estimates for the Obstacle Problem}\label{sec:Efficiency}
\par
 Let the local data oscillation $\Osc(f;T)$ be defined by
\begin{equation*}
 \Osc(f;T)=\hT^2\|f-\bfT\|_{L_2(T)},
\end{equation*}
 where $\bfT$ is the $L_2$ projection of $f$ in the polynomial space $P_j(T)$ with
 $j=\max(k-4,0)$.  The global data oscillation is then given by
 $$\OSC=\Big(\sum_{T\in\cT_h}\Osc(f;T)^2\Big)^\frac12.$$
\begin{theorem}\label{thm:LocalEfficiencyII}
  There exists a positive constant $C$, depending only on the shape regularity of $\cT_h$, such that
\begin{alignat*}{3}
  \etaeO&\leq \frac{\sigma}{|e|^\frac12} \|\jump{(u-u_h)}\|_{L_2(e)}&\quad&\forall\,e\in\cE_h,\\
  \etaeTwo&\leq C\Big[\sum_{T\in\cT_e}\big[|u-u_h|_{H^2(T)}+\Osc(f;T)\big]+\|\lambda-\lambda_h\|_{H^{-2}(\Omega_e)}
 \Big]&\qquad&\forall\,e\in\cE_h^i,\\
  \etaeThree&\leq C\Big[\sum_{T\in\cT_e}\big[|u-u_h|_{H^2(T)}+\Osc(f;T)\big]+\|\lambda-\lambda_h\|_{H^{-2}(\Omega_e)}\\
 &\hspace{40pt}+\frac{1}{|e|}\|\jump{(u-u_h)}\|_{L_2(e)}^2\Big]&\qquad&\forall\,e\in\cE_h^i,\\
  \etaT&\leq C\big(|u-u_h|_{H^2(T)}+\Osc(f;T)+\|\lambda-\lambda_h\|_{H^{-2}(T)}
   \big)&\qquad&\forall\,T\in\cT_h,
\end{alignat*}
 where $\cT_e$ is the set of the two triangles that share the edge $e$ and
 $\O_e$ is the interior of $\bigcup_{T\in\cT_e}\bar T$.
\end{theorem}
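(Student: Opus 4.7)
The plan is to adapt the classical Verf\"urth bubble-function efficiency technique, in the fourth-order $C^0$IP form developed in \cite{BGS:2010:C0IP,Brenner:2012:C0IP}, to the obstacle setting. The crucial simplification is that $\lambda_h$ is a sum of Dirac point measures supported at the vertices of $\cT_h$, so every $\phi\in\ES$ vanishing at all vertices automatically satisfies $\langle\lambda_h,\phi\rangle=0$. Consequently, for such $\phi$ the residual functional
\[
  R(\phi):=a(u-u_h,\phi)-\langle\lambda-\lambda_h,\phi\rangle
\]
reduces, after element-wise integration by parts, to
\[
  R(\phi)=\TSum\int_T(f-\Delta^2u_h)\phi\,dx+\ESumI\int_e\jumpThree{u_h}\,\phi\,ds-\ESumI\int_e\jumpTwo{u_h}\,\partial_n\phi\,ds,
\]
while still obeying $|R(\phi)|\leq\bigl(|u-u_h|_{H^2(\mathrm{supp}\,\phi)}+\|\lambda-\lambda_h\|_{H^{-2}(\mathrm{supp}\,\phi)}\bigr)|\phi|_{H^2(\O)}$. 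Each of the four efficiency estimates is then produced by selecting a localised bubble test function that isolates exactly one residual contribution while vanishing at every vertex of $\cT_h$.

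The bound on $\etaeO$ is immediate, since $u\in\ES$ forces $\jump{u}=0$ on every edge and hence $\|\jump{u_h}\|_{L_2(e)}=\|\jump{(u-u_h)}\|_{L_2(e)}$. For $\etaT$ I would take $\phi_T=b_T^2(\bfT-\Delta^2u_h)\in H^2_0(T)$, with $b_T$ the barycentric triangle bubble; $\phi_T$ vanishes at every vertex, polynomial norm equivalence gives $\|\bfT-\Delta^2u_h\|_{L_2(T)}^2\lesssim\int_T(\bfT-\Delta^2u_h)\phi_T\,dx=\int_T(\bfT-f)\phi_T\,dx+R(\phi_T)$, and the inverse estimate $|\phi_T|_{H^2(T)}\lesssim h_T^{-2}\|\bfT-\Delta^2u_h\|_{L_2(T)}$ combined with the triangle inequality between $f-\Delta^2u_h$ and $\bfT-\Delta^2u_h$ (which introduces $\Osc(f;T)$) delivers the stated bound.

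For $\etaeTwo$, construct $\phi_{e,2}\in H^2_0(\O_e)$ with $\phi_{e,2}|_e=0$ and $\partial_n\phi_{e,2}|_e$ equal to a positive edge bubble times the polynomial extension of $\jumpTwo{u_h}|_e$; extended by zero to $\O$ it still vanishes at every vertex. Because $\phi_{e,2}|_e=0$, the $\jumpThree{u_h}$ contribution in $R(\phi_{e,2})$ drops, leaving $\int_e\jumpTwo{u_h}\,\partial_n\phi_{e,2}\,ds\gtrsim|e|^{-1}\|\jumpTwo{u_h}\|_{L_2(e)}^2$. Reference-element scaling produces $|\phi_{e,2}|_{H^2(\O_e)}\lesssim|e|^{-3/2}\|\jumpTwo{u_h}\|_{L_2(e)}$ and $\|\phi_{e,2}\|_{L_2(\O_e)}\lesssim|e|^{1/2}\|\jumpTwo{u_h}\|_{L_2(e)}$; substituting these, absorbing the $\|f-\Delta^2u_h\|_{L_2(T)}$ contribution via the $\etaT$ bound already obtained, and dividing through by $\|\jumpTwo{u_h}\|_{L_2(e)}$ yields the desired estimate.

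The delicate case is $\etaeThree$. Build $\phi_{e,3}\in H^2_0(\O_e)$ whose restriction to $e$ is a positive edge bubble times the polynomial extension of $\jumpThree{u_h}|_e$, so that $\int_e\jumpThree{u_h}\,\phi_{e,3}\,ds\gtrsim\|\jumpThree{u_h}\|_{L_2(e)}^2$. Unlike the $\phi_{e,2}$ construction, within the available polynomial degree one cannot also enforce $\partial_n\phi_{e,3}|_e=0$, so a residual $\int_e\jumpTwo{u_h}\,\partial_n\phi_{e,3}\,ds$ must be controlled; doing so by invoking the $\etaeTwo$ estimate just proved together with $\|\jump{u_h}\|_{L_2(e)}=\|\jump{(u-u_h)}\|_{L_2(e)}$ and a Young-type split gives the $|e|^{-1}\|\jump{(u-u_h)}\|_{L_2(e)}^2$ correction that appears in the final bound; aside from that the manipulations mirror those in the $\etaeTwo$ case. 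The main technical obstacle in the whole theorem is therefore the construction of the edge bubble $\phi_{e,3}$ and the careful bookkeeping needed for this unavoidable $\jumpTwo{u_h}$ remainder.
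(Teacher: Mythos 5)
Your overall strategy is exactly the paper's: because $\lambda_h$ is a sum of Dirac measures at the vertices, every bubble test function that vanishes at all vertices annihilates $\lambda_h$, so the identities behind the bubble-function efficiency argument of Brenner (2012, Section~5.3) only pick up the extra term $-\int(\cdot)\,d(\lambda-\lambda_h)$, absorbed by the local dual norms $\|\lambda-\lambda_h\|_{H^{-2}(T)}$ and $\|\lambda-\lambda_h\|_{H^{-2}(\O_e)}$; this is precisely how the paper modifies the relations leading to (5.17), (5.26) and (5.32) there. Your treatments of $\etaeO$, $\etaT$ and $\etaeTwo$ are consistent with this and go through.

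There is, however, a genuine gap in the $\etaeThree$ case, and it originates in your residual identity. For $u_h$ that is only $C^0$ across interior edges, elementwise integration by parts of $\TSum\int_T D^2u_h:D^2\phi\,dx$ does not yield only $\int_e\jumpTwo{u_h}\,\p\phi/\p n\,ds$ and $\int_e\jumpThree{u_h}\,\phi\,ds$: it also produces edge terms driven by the nonvanishing first-normal-derivative jump $\jump{u_h}$, namely the mixed term $\int_e\big(\text{jump of }\p^2u_h/\p n\p t\big)\,\p_t\phi\,ds=\int_e\p_t\jump{u_h}\,\p_t\phi\,ds$, and the discrepancy between the jump of $\p(\Delta u_h)/\p n$ and $\jumpThree{u_h}$, which equals $\p_t^2\jump{u_h}$. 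These terms are invisible for your $\phi_T\in H^2_0(T)$ and for $\phi_{e,2}$ (zero trace on $e$ implies zero tangential derivative there), which is why your first three estimates survive, but they do not vanish for $\phi_{e,3}$, whose trace on $e$ is nonzero; after inverse estimates on $e$ (together with $\jump{u}=0$, so $\jump{u_h}=\jump{(u_h-u)}$) they are exactly what generates the $\frac{1}{|e|}\|\jump{(u-u_h)}\|_{L_2(e)}^2$ correction in the stated bound. Your proposed mechanism cannot produce this term: controlling a remainder $\int_e\jumpTwo{u_h}\,\p_n\phi_{e,3}\,ds$ by the already-proved $\etaeTwo$ estimate plus a Young split only reproduces additional copies of $|u-u_h|_{H^2}$, $\Osc(f;T)$ and $\|\lambda-\lambda_h\|_{H^{-2}(\O_e)}$ and has no connection to $\jump{u_h}$, so the identification $\|\jump{u_h}\|_{L_2(e)}=\|\jump{(u-u_h)}\|_{L_2(e)}$ never enters and the derivation does not close. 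To repair it you must use the full $C^0$ residual identity and estimate the $\p_t\jump{u_h}$ and $\p_t^2\jump{u_h}$ contributions by edge inverse inequalities (or integrate them by parts along $e$, using once more that the test function vanishes at the endpoints of $e$, i.e.\ at vertices) --- which is what the argument of Brenner (2012, Section~5.3.4) that the paper modifies by inserting $\int_{\O_e}(\zeta_2\zeta_3)\,d(\lambda-\lambda_h)$ already does.
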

\begin{proof}
  The estimate for $\eta_{e,1}$ is obvious.  The other estimates are obtained by modifying the
  arguments in \cite[Section~5.3]{Brenner:2012:C0IP}.
\par
 In the proof of the estimate \cite[$(5.17)$]{Brenner:2012:C0IP} (with $v=u_h$), we replace the relation
\begin{equation*}
  \int_T (\bfT-\Delta^2 u_h)z\,dx=\int_T D^2(u-u_h):D^2z\,dx+\int_T (\bfT-f)z\,dx
\end{equation*}
 by
\begin{align}\label{eq:AreaBubble}
  \int_T (\bfT-\Delta^2 u_h)z\,dx=\int_T D^2(u-u_h):D^2z\,dx+\int_T (\bfT-f)z\,dx
 -\int_T z\,d(\lambda-\lambda_h)
\end{align}
 to obtain the estimate
\begin{align*}
  \int_T (\bfT-\Delta^2u_h)z&\leq C\big(\hT^{-2}|u-u_h|_{H^2(T)}
  +\|f-\bfT\|_{L_2(T)}+\hT^{-2}\|\lambda-\lambda_h\|_{H^{-2}(T)}\big)\|z\|_{L_2(T)},
\end{align*}
 which then leads to the estimate for $\etaT$.  Note that \eqref{eq:AreaBubble} holds because
 the bubble function $z$ vanishes at the vertices of $\cT_h$.
\par
 In the proof of the estimate  \cite[$(5.26)$]{Brenner:2012:C0IP} (with $v=u_h$), we replace the relation
\begin{align*}
 &\sum_{T\in\cT_e}\Big(-\int_T D^2u_h:D^2(\zeta_1\zeta_2)\,dx
 +\int_T(\Delta^2u_h)(\zeta_1\zeta_2)\,dx\Big)\\
 &\hspace{40pt}=
    \sum_{T\in\cT_e}\int_TD^2(u-u_h):D^2(\zeta_1\zeta_2)\,dx-\sum_{T\in\cT_e}\int_T (f-\Delta^2u_h)
    (\zeta_1\zeta_2)\,dx
\end{align*}
 that appears in \cite[$(5.24)$]{Brenner:2012:C0IP} by
\begin{align}\label{eq:EdgeBubble1}
  &\sum_{T\in\cT_e}\Big(-\int_T D^2u_h:D^2(\zeta_1\zeta_2)\,dx
 +\int_T(\Delta^2u_h)(\zeta_1\zeta_2)\,dx\Big)\notag\\
  &\hspace{40pt}=
  \sum_{T\in\cT_e}\int_TD^2(u-u_h):D^2(\zeta_1\zeta_2)\,dx-\sum_{T\in\cT_e}\int_T
  (f-\Delta^2u_h)(\zeta_1\zeta_2)\,dx\\
    &\hspace{70pt} -\int_{\O_e}(\zeta_1\zeta_2)\,d(\lambda-\lambda_h)\notag
\end{align}
 to obtain the estimate
\begin{align*}
  &\sum_{T\in\cT_e}\Big(-\int_T D^2u_h:D^2(\zeta_1\zeta_2)\,dx
 +\int_T(\Delta^2u_h)(\zeta_1\zeta_2)\,dx\Big)\\
     &\hspace{30pt}\leq C\Big[\sum_{T\in\cT_e}\big(\hT^{-2}|u-u_h|_{H^2(T)}+\|f-\Delta^2u_h\|_{L_2(T)}\big)
     +\hT^{-2}\|\lambda-\lambda_h\|_{H^{-2}(\O_e)}\Big]\|\zeta_1\zeta_2\|_{L_2(\O_e)},
\end{align*}
 which then leads to the estimate for $\eta_{e,2}$.  Note that
 \eqref{eq:EdgeBubble1} holds because the bubble function $\zeta_1\zeta_2$ vanishes
 at the vertices of $\cT_h$.
\par
 Finally, in the proof of the estimate \cite[$(5.32)$]{Brenner:2012:C0IP} (with $v=u_h$),
 we replace the relation
\begin{align*}
  &\sum_{T\in\cT_e}\Big(\int_T D^2u_h:D^2(\zeta_2\zeta_3)\,dx
    -\int_T (\Delta^2 u_h)(\zeta_2\zeta_3)\,dx\Big)\\
    &\hspace{40pt}=\sum_{T\in\cT_e}\int_T D^2(u_h-u):D^2(\zeta_2\zeta_3)\,dx+
        \sum_{T\in\cT_2}\int_T(f-\Delta^2u_h)(\zeta_2\zeta_3)\,dx
\end{align*}
 that appears in \cite[$(5.30)$]{Brenner:2012:C0IP} by
\begin{align}\label{eq:EdgeBubble2}
  &\sum_{T\in\cT_e}\Big(\int_T D^2u_h:D^2(\zeta_2\zeta_3)\,dx
    -\int_T (\Delta^2 u_h)(\zeta_2\zeta_3)\,dx\Big)\notag\\
    &\hspace{40pt}=\sum_{T\in\cT_e}\int_T D^2(u_h-u):D^2(\zeta_2\zeta_3)\,dx+
        \sum_{T\in\cT_2}\int_T(f-\Delta^2u_h)(\zeta_2\zeta_3)\,dx\\
        &\hspace{70pt}+\int_{\O_e} (\zeta_2\zeta_3)\,d(\lambda-\lambda_h)
        \notag
\end{align}
 to obtain the estimate
\begin{align*}
  &\sum_{T\in\cT_e}\Big(\int_T D^2u_h:D^2(\zeta_2\zeta_3)\,dx
    -\int_T (\Delta^2 u_h)(\zeta_2\zeta_3)\,dx\Big)\\
     &\hspace{30pt}\leq C\Big[\sum_{T\in\cT_e}\big(\hT^{-2}|u-u_h|_{H^2(T)}+\|f-\Delta^2u_h\|_{L_2(T)}\big)
     +\hT^{-2}\|\lambda-\lambda_h\|_{H^{-2}(\O_e)}\Big]\|\zeta_2\zeta_3\|_{L_2(\O_e)},
\end{align*}
 which then leads to the estimate for $\etaeThree$.  Again \eqref{eq:EdgeBubble2} holds because
 the bubble function $\zeta_2\zeta_3$ vanishes at the vertices of $\cT_h$.
\end{proof}
\par
 We can also prove a global efficiency result under the following assumption:
\begin{align}\label{eq:RefinementLevels}
 &\text{The triangles (resp. interior edges) of $\cT_h$ can be divided into  $n$ disjoint
  groups}\notag\\
  &\text{so that the ratio of the diameters of any two triangles
   (resp. interior edges) in}\\
   &\text{the same group is bounded above by a constant $\tau\geq1$.}\notag
\end{align}
\begin{theorem}\label{thm:GlobalEfficiency}
  Under assumption \eqref{eq:RefinementLevels},
  there exists a positive constant $C$ depending only on $\tau$, $k$ and
   the shape regularity of $\cT_h$ such that
\begin{equation}\label{eq:GlobalEfficiency}
  \eta_h\leq C\big(\sqrt{\sigma}\|u-u_h\|_h+\sqrt{n}\|\lambda-\lambda_h\|_{H^{-2}(\O)}+\OSC\big).
\end{equation}
\end{theorem}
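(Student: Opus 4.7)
The strategy is to square and sum the four families of local efficiency bounds of Theorem~\ref{thm:LocalEfficiencyII}, allocating each contribution to one of the three terms on the right-hand side of \eqref{eq:GlobalEfficiency}. Squaring $\etaeO$ and using $\jump{(u-u_h)}=-\jump{u_h}$ (since $u\in H^2_0(\O)$ has no jumps) gives
$$\sum_{e\in\cE_h}\etaeO^2 \leq \sigma\sum_{e\in\cE_h}\frac{\sigma}{|e|}\|\jump{(u-u_h)}\|_{L_2(e)}^2\leq \sigma\|u-u_h\|_h^2,$$
which accounts for the $\sqrt\sigma\|u-u_h\|_h$ contribution. For the interior bubble estimator, the local bound $\etaT \leq C(|u-u_h|_{H^2(T)}+\Osc(f;T)+\|\lambda-\lambda_h\|_{H^{-2}(T)})$ is already linear, so squaring and summing directly yields the $\|u-u_h\|_h^2$ and $\OSC^2$ contributions; the dual-norm part is bounded by $\|\lambda-\lambda_h\|_{H^{-2}(\O)}^2$ via a disjoint-support duality trick: for each $T$ pick $v_T\in H^2_0(T)$ optimal for the local dual norm, extend by zero to $\O$, and test $\lambda-\lambda_h$ against $v=\sum_T\|\lambda-\lambda_h\|_{H^{-2}(T)}v_T$; the pairwise disjoint supports give $|v|_{H^2(\O)}^2=\sum_T\|\lambda-\lambda_h\|_{H^{-2}(T)}^2$, which closes the estimate.

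For the two edge-residual estimators $\etaeTwo$ and $\etaeThree$, squaring their local bounds produces contributions of the form $\sum_e\sum_{T\in\cT_e}|u-u_h|_{H^2(T)}^2$, $\sum_e\sum_{T\in\cT_e}\Osc(f;T)^2$, $\sum_e\|\lambda-\lambda_h\|_{H^{-2}(\O_e)}^2$, and (for $\etaeThree$ only) the quartic term $\sum_e|e|^{-2}\|\jump{(u-u_h)}\|_{L_2(e)}^4$. The first two are handled by shape-regular finite overlap (each triangle belongs to at most three $\cT_e$'s), producing $C\|u-u_h\|_h^2$ and $C\OSC^2$. The quartic term is treated as a higher-order contribution: since $\|\jump{(u-u_h)}\|_{L_2(e)}\leq|e|^{1/2}\sigma^{-1/2}\|u-u_h\|_h$, one factor of $\|\jump{(u-u_h)}\|_{L_2(e)}^2$ is controlled by $|e|\sigma^{-1}\|u-u_h\|_h^2$, and the remaining $\sum_e|e|^{-1}\|\jump{(u-u_h)}\|_{L_2(e)}^2$ is absorbed into $\|u-u_h\|_h^2$, so the total is subsumed by the $\sqrt\sigma\|u-u_h\|_h$ contribution.

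The crux of the proof, and the source of the $\sqrt n$ factor, is the dual-norm sum $\sum_{e\in\cE_h^i}\|\lambda-\lambda_h\|_{H^{-2}(\O_e)}^2$: two adjacent edge patches $\O_e$ share a triangle, so the clean disjoint-support argument used for $\etaT$ is no longer available, and this is where assumption~\eqref{eq:RefinementLevels} enters. Partition $\cE_h^i$ into $n$ disjoint groups $G_1,\ldots,G_n$ whose edges have mutually comparable sizes (ratio $\leq\tau$). Within each $G_i$, choose $w_e\in H^2_0(\O_e)$ with $|w_e|_{H^2(\O_e)}=1$ and $\langle\lambda-\lambda_h,w_e\rangle=\|\lambda-\lambda_h\|_{H^{-2}(\O_e)}$, and set $v_i=\sum_{e\in G_i}\|\lambda-\lambda_h\|_{H^{-2}(\O_e)}w_e$, extended by zero to $\O$. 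A scaling argument exploiting the size-comparability within $G_i$ together with the three-edges-per-triangle overlap yields $|v_i|_{H^2(\O)}^2\leq C_\tau\sum_{e\in G_i}\|\lambda-\lambda_h\|_{H^{-2}(\O_e)}^2$; the duality inequality $\sum_{e\in G_i}\|\lambda-\lambda_h\|_{H^{-2}(\O_e)}^2=\langle\lambda-\lambda_h,v_i\rangle\leq\|\lambda-\lambda_h\|_{H^{-2}(\O)}|v_i|_{H^2(\O)}$ then gives $\sum_{e\in G_i}\|\lambda-\lambda_h\|_{H^{-2}(\O_e)}^2\leq C_\tau\|\lambda-\lambda_h\|_{H^{-2}(\O)}^2$, and summing over the $n$ groups produces the factor $n$. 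Collecting all of the above and taking square roots delivers \eqref{eq:GlobalEfficiency}; the main obstacle is exactly this local-to-global duality step, which is what forces both the refinement-level hypothesis and the $\sqrt n$ in the statement.
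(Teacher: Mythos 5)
Your treatment of $\etaeO$, $\etaT$ and $\etaeTwo$ is essentially sound, but the step for $\etaeThree$ has a genuine gap. The local bound of Theorem~\ref{thm:LocalEfficiencyII} for $\etaeThree$ is inhomogeneous: it already contains the quadratic quantity $|e|^{-1}\|\jump{(u-u_h)}\|_{L_2(e)}^2$, so squaring it necessarily produces the quartic term $|e|^{-2}\|\jump{(u-u_h)}\|_{L_2(e)}^4$, and your absorption argument only yields
\begin{equation*}
\sum_{e\in\cE_h^i}\frac{1}{|e|^2}\|\jump{(u-u_h)}\|_{L_2(e)}^4\;\le\;\sigma^{-2}\|u-u_h\|_h^4 .
\end{equation*}
A quartic term is not ``subsumed by the $\sqrt\sigma\|u-u_h\|_h$ contribution'': the inequality $\sigma^{-2}\|u-u_h\|_h^4\le C\sigma\|u-u_h\|_h^2$ requires an a priori bound on $\|u-u_h\|_h$, and any such bound (e.g.\ via \eqref{eq:APriori}) brings in the data and the solution, whereas the constant in \eqref{eq:GlobalEfficiency} may depend only on $\tau$, $k$ and the shape regularity. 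This is precisely why the paper does not square the stated local estimates: it returns to the bubble-function identities \eqref{eq:AreaBubble}, \eqref{eq:EdgeBubble1}, \eqref{eq:EdgeBubble2} and the estimates of \cite{Brenner:2012:C0IP}, in which the normal-derivative jump enters the bound for $\etaeThree^2$ linearly, as $|e|^{-1}\|\jump{(u-u_h)}\|_{L_2(e)}^2$, so that $\sum_{e}|e|^{-1}\|\jump{(u-u_h)}\|_{L_2(e)}^2\le\sigma^{-1}\|u-u_h\|_h^2$ closes the argument. To repair your proof you must establish the local estimate at the squared level, e.g.\ $\etaeThree^2\le C\big(\sum_{T\in\cT_e}[\,|u-u_h|_{H^2(T)}^2+\Osc(f;T)^2]+\|\lambda-\lambda_h\|_{H^{-2}(\O_e)}^2+|e|^{-1}\|\jump{(u-u_h)}\|_{L_2(e)}^2\big)$, rather than cite Theorem~\ref{thm:LocalEfficiencyII} as a black box.

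A secondary point concerns where $n$ and $\tau$ actually enter. Your local-to-global duality step for the dual norms is correct, but size comparability is not what makes it work: for the triangle terms disjoint supports give $\sum_{T}\|\lambda-\lambda_h\|_{H^{-2}(T)}^2\le\|\lambda-\lambda_h\|_{H^{-2}(\O)}^2$, and for the edge patches the finite overlap (each triangle lies in at most three patches $\O_e$) together with Cauchy--Schwarz already gives $\sum_{e\in\cE_h^i}\|\lambda-\lambda_h\|_{H^{-2}(\O_e)}^2\le 3\,\|\lambda-\lambda_h\|_{H^{-2}(\O)}^2$, with no grouping at all. In the paper, assumption \eqref{eq:RefinementLevels} and the factor $\sqrt n$ arise elsewhere: the authors pair $\lambda-\lambda_h$ with sums of bubble functions over an entire group $\cT_{h,j}$, and comparability of diameters within a group is needed to move the $\hT$-weights through the group-level estimate; one global dual norm is then incurred per group, hence the factor $n$. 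So once the $\etaeThree$ step is repaired as above, your route would in fact prove the bound without $\sqrt n$ and without \eqref{eq:RefinementLevels}; as written, however, the quartic term leaves the theorem unproven.
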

\begin{proof}  We have a trivial estimate
 \begin{equation*}
   \ESum\etaeO^2\leq C\ESum\frac{\sigma^2}{|e|}\left\|\Jump{(u-u_h)}\right\|_{L_2(e)}^2.
 \end{equation*}
\par
 For the estimate involving $\etaT$,
 we first write
  $\cT_h$ as the disjoint union $\cT_{h,1}\cup\cdots\cup\cT_{h,n}$ so that the  ratio of the diameters of
  any two triangles in $\cT_{h,j}$
 is bounded by $\tau$.  For $1\leq j\leq n$, the subdomain $\O_j$ is the interior of
 $\cup_{T\in\cT_{h,j}}\bar T$.
\par
 For any $T\in\cT_{h,j}$, let $\zT$ be the bubble function  in \cite[Section~5.3.2]{Brenner:2012:C0IP}
 associated with $T$ and we define
 $z_j=\sum_{T\in\cT_{h,j}}\zT \in H^2_0(\O_j)$.
   It follows from \cite[$(5.16)$]{Brenner:2012:C0IP}, \eqref{eq:AreaBubble}
   and a standard inverse estimate that
\begin{align*}
 \|\bfT-\Delta^2u_h\|_{L_2(T)}^2&\leq C\int_T (\bfT-\Delta^2u_h)\zT\,dx\\
 &\leq C\Big(\big[\hT^{-2}|u-u_h|_{H^2(T)}
  +\|f-\bfT\|_{L_2(T)}\big]\|\zT\|_{L_2(T)}-\int_T \zT\,d(\lambda-\lambda_h)\Big)
\end{align*}
 and hence
\begin{align*}
 &\sum_{T\in\cT_{h,j}}\|\bfT-\Delta^2 u_h\|_{L_2(T)}^2
  \leq C\Big(\sum_{T\in\cT_{h,j}}\big[\hT^{-2}|u-u_h|_{H^2(T)}
  +\|f-\bfT\|_{L_2(T)}\big]\|\zT\|_{L_2(T)}\\
     &\hspace{180pt}-\int_{\O_j}z_j\,d(\lambda-\lambda_h)\Big)\\
     &\hspace{40pt}\leq C\Big[\Big(\sum_{T\in\cT_{h,j}}\big[\hT^{-4}|u-u_h|_{H^2(T)}^2
     +\|f-\bfT\|_{L_2(T)}^2\big]\Big)^\frac12\Big(\sum_{T\in\cT_{h,j}}\|\zT\|_{L_2(T)}^2\Big)^\frac12\\
     &\hspace{80pt}+
     \|\lambda-\lambda_h\|_{H^{-2}(\O_j)}
     \Big(\sum_{T\in\cT_{h,j}}\hT^{-4}\|\zT\|_{L_2(T)}^2\Big)^\frac12\Big]
\end{align*}
 by a standard inverse estimate.
\par
 Therefore we have
\begin{align}\label{eq:BubbleEst1}
 \sum_{T\in\cT_{h,j}}\hT^4\|\bar f-\Delta^2u_h\|_{L_2(T)}^2 &\leq
 C\Big(\sum_{T\in\cT_{h,j}}\big[\hT^4\|f-\bar f\|_{L_2(T)}^2+
    |u-u_h|_{H^2(T)}^2\big] \\
    &\hspace{60pt}+\|\lambda-\lambda_h\|_{H^{-2}(\O_j)}^2\Big)\notag
\end{align}
because (cf. \cite[$(5.16)$]{Brenner:2012:C0IP})
  $$\|\zT\|_{L_2(T)}\approx \|\bar f-\Delta^2u_h\|_{L_2(T)}$$
 and the diameters $\hT$ are comparable for $T\in\cT_{h,j}$.
\par
 It follows from \eqref{eq:BubbleEst1} that
\begin{align*}
 \TSum\etaT^2&= \TSum\hT^4\|f-\Delta^2u_h\|_{L_2(T)}^2\\
    &\leq
    \sum_{j=1}^n \sum_{T\in\cT_{h,j}}2\hT^4\big[\|\bfT-\Delta^2u_h\|_{L_2(T)
    }+\|f-\bfT\|_{L_2(T)}\big]^2\\
    &\leq  C\sum_{j=1}^n \Big(\sum_{T\in\cT_{h,j}}\big[\hT^4\|f-\bfT\|_{L_2(T)}^2+
    |u-u_h|_{H^2(T)}^2\big] \\
    &\hspace{60pt}+\|\lambda-\lambda_h\|_{H^{-2}(\O_j)}^2\Big)\\
    &\leq C\Big(\OSC^2+\sum_{T\in\cT_h}|u-u_h|_{H^2(T)}^2
    +n\|\lambda-\lambda_h\|_{H^{-2}(\O)}^2\Big),
\end{align*}
 where we have also used the trivial estimate
$
  \|\lambda-\lambda_h\|_{H^{-2}(\O_j)}\leq \|\lambda-\lambda_h\|_{H^{-2}(\O)}.
$
\par
 The estimates for $\etaeTwo$ and $\etaeThree$ can be established by
 using \eqref{eq:EdgeBubble1}, \eqref{eq:EdgeBubble2} and results in
 \cite[Sections~5.3.3 and 5.3.4]{Brenner:2012:C0IP}.  Their derivations are
 similar to the derivation for $\etaT$ and hence are omitted.
\end{proof}
%
\section{An Adaptive Algorithm}\label{sec:Adaptive}
 In view of the efficiency estimates in Section~\ref{sec:Efficiency},
 we will use $\eta_h$ from \eqref{eq:BVPEstimator} as the error indicator in the adaptive loop
\begin{equation*}
  \textsf{Solve}\longrightarrow \textsf{Estimate}\longrightarrow \textsf{Mark}\longrightarrow
  \textsf{Refine}
\end{equation*}
 to define an adaptive algorithm for the $C^0$ interior penalty methods for
 \eqref{eq:Obstacle}--\eqref{eq:KDef}.
\par
 In the step \textsf{Solve}, we compute the solution of the discrete obstacle problem \eqref{eq:C0IP} by
 a primal-dual active set method \cite{BIK:1999:PDAS,HIK:2003:PDAS}.  In the step
 \textsf{Estimate}, we
 compute  $\etaeO$, $\etaeTwo$, $\etaeThree$
 and $\etaT$ defined in \eqref{eq:etae1Def}--\eqref{eq:etaTDef}.  In the step \textsf{Mark}, we use
 the D\"orfler marking strategy \cite{Dorfler:1996:AdaptiveConvergence} to mark a minimum number of
 triangles and edges whose contributions exceed $\theta\eta_h$ for some
 $\theta\in(0,1)$.  In the step \textsf{Refine},
 we refine the marked triangles and edges followed by a closure algorithm that preserves the conformity
 of the triangulation.
\par
  In the adaptive setting the subscript $h$ will be replaced by the subscript $\ell$,  where $\ell=0,1,\,\ldots$ denotes the
 level of refinements.
 The adaptive algorithm generates a sequence of triangulations $\cT_\ell$ of $\O$,
 a sequence of  solutions
 $u_\ell\in V_\ell$ of the discrete obstacle problems and a sequence of error indicators
 $\eta_\ell$.
\par
 According to Theorem~\ref{thm:ReliableII}, we can use the following result to monitor
 the asymptotic convergence rate of the adaptive algorithm.
\begin{lemma}\label{lem:AsymptoticConvergenceRate}
 Suppose  $\eta_\ell=O(N_\ell^{-\gamma})$, where $N_\ell$ is the number of degrees of freedom
 $($dof$)$  at
 the refinement level $\ell$.  Then  we have
 \begin{equation}\label{eq:AsymptoticConvergence}
  \|u-u_\ell\|_\ell+\|\lambda-\lambda_\ell\|_{H^{-2}(\O)}=O(N_\ell^{-\gamma})
 \end{equation}
  provided that
\begin{align}
    Q_{\ell,1}=\sqrt{\max_{T\in\cT_\ell}\hT\sum_{ e\in\tilde\cE_T}|e|^{-1/2}\|\jump{u_\ell}\|_{L_2(e)}}&=O(N_\ell^{-\gamma}),\label{eq:MaxJump}\\
   Q_{\ell,2}=\|(\psi-u_\ell)^+\|_{L_\infty(\O)}^\frac12 &=O(N_\ell^{-\gamma}).\label{eq:Violation}
\end{align}
 In particular,  the estimate \eqref{eq:AsymptoticConvergence} holds if $Q_{\ell,1}$
  and $Q_{\ell,2}$ are dominated  by $\eta_\ell$.
\end{lemma}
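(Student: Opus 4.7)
The plan is to read off the result as a direct consequence of Theorem~\ref{thm:ReliableII} together with the fact that the total variation $|\lambda|$ of the Lagrange multiplier is a fixed constant, independent of the refinement level~$\ell$.

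First, I would rewrite the reliability bound \eqref{eq:ReliableII} with $h$ replaced by the refinement index $\ell$, so that it reads
\begin{equation*}
\|u-u_\ell\|_\ell+\|\lambda-\lambda_\ell\|_{H^{-2}(\O)}
 \leq C\bigl(\eta_\ell+|\lambda|^{1/2}Q_{\ell,1}\bigr)+|\lambda|^{1/2}Q_{\ell,2},
\end{equation*}
after identifying the inner square-root factor in \eqref{eq:ReliableII} with $Q_{\ell,1}$ from \eqref{eq:MaxJump} and the $L_\infty$ term with $Q_{\ell,2}$ from \eqref{eq:Violation}. This identification is literally the definitions, so no work is required there.

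Next, I would point out that $|\lambda|$ is a finite constant depending only on the continuous data $f$, $\psi$ and $\O$ (this follows from \eqref{eq:lambdaDef} together with $u\in H^2_0(\O)\subset C(\bar\O)$; one can also invoke the bound sketched in Remark~\ref{rem:Genuine}, but no computability is needed here). In particular, $|\lambda|^{1/2}$ does not grow with $\ell$. Consequently, under the hypotheses $\eta_\ell=O(N_\ell^{-\gamma})$, $Q_{\ell,1}=O(N_\ell^{-\gamma})$ and $Q_{\ell,2}=O(N_\ell^{-\gamma})$, each of the three terms on the right-hand side of the displayed inequality is $O(N_\ell^{-\gamma})$, which yields \eqref{eq:AsymptoticConvergence}.

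For the final assertion, if $Q_{\ell,1}$ and $Q_{\ell,2}$ are both dominated by $\eta_\ell$, then the assumption $\eta_\ell=O(N_\ell^{-\gamma})$ automatically forces $Q_{\ell,1}=O(N_\ell^{-\gamma})$ and $Q_{\ell,2}=O(N_\ell^{-\gamma})$, and the previous paragraph applies. Honestly there is no real obstacle in this proof---the content is entirely in Theorem~\ref{thm:ReliableII}, and the lemma is just the observation that the three computable quantities $\eta_\ell$, $Q_{\ell,1}$, $Q_{\ell,2}$ that appear on the right-hand side of \eqref{eq:ReliableII} can each be monitored numerically to certify an asymptotic rate of $N_\ell^{-\gamma}$. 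The only thing one needs to be a little careful about is not to confuse $Q_{\ell,1}$ (which already includes a square root in its definition) with the quantity inside that square root.
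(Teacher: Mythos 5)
Your proposal is correct and is exactly the intended argument: the paper gives no separate proof of Lemma~\ref{lem:AsymptoticConvergenceRate} precisely because it follows, as you say, by reading \eqref{eq:ReliableII} at level $\ell$, identifying the two non-$\eta_\ell$ terms with $Q_{\ell,1}$ and $Q_{\ell,2}$, and using that $|\lambda|$ is a fixed finite constant (finiteness following, e.g., from the compact support of $\lambda$ in $\O$ and \eqref{eq:lambdaDef}). Your handling of the final ``dominated by $\eta_\ell$'' assertion is likewise the intended one-line observation.
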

\par\smallskip
 Note that $\|\lambda-\lambda_h\|_{H^{-2}(\O)}$ is not computable.  However we can
 test the convergence of $\|\lambda-\lambda_h\|_{H^{-2}(\O)}$ indirectly as follows.
 Let $\phi\in C^\infty_c(\O)$ be equal to $1$ on the supports of $\lambda$ and the $\lambda_\ell$'s.
 Then we have
\begin{equation}\label{eq:LMIndirect}
  |\lambda|-|\lambda_\ell|=\int_\O \phi\,d(\lambda-\lambda_\ell)
      \leq |\phi|_{H^2(\O)}\|\lambda-\lambda_\ell\|_{H^{-2}(\O)},
\end{equation}
 which implies
\begin{align}\label{eq:DscreteLMEstimates} |\lambda_\ell|-|\lambda_{\ell+1}|
&=(|\lambda_\ell|-|\lambda|)+(|\lambda|-|\lambda_{\ell+1}|)\\
        &\leq |\phi|_{H^2(\O)}\big(\|\lambda-\lambda_\ell\|_{H^{-2}(\O)}
         +\|\lambda-\lambda_{\ell+1}\|_{H^{-2}(\O)}\big).\notag
\end{align}
\par
 Let $\Lambda_\ell$ be defined by
\begin{equation}\label{eq:LambdaEllDef}
  \Lambda_\ell=|(|\lambda_\ell|-|\lambda_{\ell+1}|)|.
\end{equation}
 The following result is an immediate consequence of Lemma~\ref{lem:AsymptoticConvergenceRate}
  and \eqref{eq:DscreteLMEstimates}.
\begin{lemma}\label{lem:LMTest}
    Suppose  $\eta_\ell=O(N_\ell^{-\gamma})$, where $N_\ell$ is the number of dof  at
 the refinement level $\ell$.  Then we have
\begin{equation*}
   \Lambda_\ell=O(N_\ell^{-\gamma})
\end{equation*}
 provided that  \eqref{eq:MaxJump} and \eqref{eq:Violation} are valid.
\end{lemma}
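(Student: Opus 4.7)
The proof is a direct chaining of Lemma~\ref{lem:AsymptoticConvergenceRate} with the inequality \eqref{eq:DscreteLMEstimates}, and my plan is to carry out exactly that combination. The hypotheses of Lemma~\ref{lem:LMTest} --- namely $\eta_\ell = O(N_\ell^{-\gamma})$ together with \eqref{eq:MaxJump} and \eqref{eq:Violation} --- are precisely the hypotheses of Lemma~\ref{lem:AsymptoticConvergenceRate}, so I would first invoke that lemma to obtain
\[
 \|\lambda-\lambda_\ell\|_{H^{-2}(\O)} = O(N_\ell^{-\gamma}).
\]
Since the adaptive refinement satisfies $N_{\ell+1}\ge N_\ell$ and $\gamma>0$, the analogous bound at level $\ell+1$ is also of size $O(N_\ell^{-\gamma})$.

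The second step is to substitute both estimates into \eqref{eq:DscreteLMEstimates}, which immediately yields $|\lambda_\ell|-|\lambda_{\ell+1}| \le |\phi|_{H^2(\O)} \cdot O(N_\ell^{-\gamma})$, with $|\phi|_{H^2(\O)}$ a fixed finite number because $\phi\in C^\infty_c(\O)$ was chosen independently of $\ell$. To promote this one-sided bound to a bound on the absolute value $\Lambda_\ell$, I would repeat the telescoping identity preceding \eqref{eq:DscreteLMEstimates} with the roles of $\ell$ and $\ell+1$ exchanged; by \eqref{eq:LMIndirect} applied symmetrically, this produces an identical upper bound on $|\lambda_{\ell+1}|-|\lambda_\ell|$. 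The two estimates together give $\Lambda_\ell = O(N_\ell^{-\gamma})$.

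The argument has essentially no technical obstacle once Lemma~\ref{lem:AsymptoticConvergenceRate} is in hand --- the step from that lemma to the present claim is a one-line triangle inequality. The only point worth double-checking is the standing requirement that a single cutoff $\phi$ works uniformly in $\ell$, i.e.\ that $\mathrm{supp}(\lambda)$ and every $\mathrm{supp}(\lambda_\ell)$ sit inside a common compact subset of $\O$ bounded away from $\p\O$. This is consistent with Remark~\ref{rem:SupportOfLambda} for $\lambda$ and is presupposed in the paragraph introducing \eqref{eq:LMIndirect} for the discrete multipliers; once it is granted, the rest of the proof is immediate.
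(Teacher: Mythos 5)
Your proposal is correct and follows exactly the route the paper takes: the paper states that the lemma is an immediate consequence of Lemma~\ref{lem:AsymptoticConvergenceRate} and \eqref{eq:DscreteLMEstimates}, which is precisely your chaining argument (including the symmetric bound needed for the absolute value in $\Lambda_\ell$). Your remark about a single cutoff $\phi$ working uniformly in $\ell$ matches the standing assumption made in the paragraph introducing \eqref{eq:LMIndirect}.
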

\begin{remark}\label{rem:lambdaEst}
  In view of \eqref{eq:LMIndirect}, we can also replace $|\lambda|$ by $|\lambda_\ell|$
  in \eqref{eq:ReliableII}  to obtain a true {\em a posteriori} error estimate that is
  asymptotically reliable  under the assumptions of Lemma~\ref{lem:AsymptoticConvergenceRate}.
\end{remark}
\section{Numerical Experiments}\label{sec:Numerics}
 In this section we report numerical results that demonstrate the estimate \eqref{eq:ReliableII}
 and illustrate the performance of the adaptive algorithm 
 for quadratic and cubic $C^0$ interior penalty methods.  We choose the penalty parameter $\sigma$
 to be
 $6$ (resp. $18$) for the quadratic (resp. cubic) $C^0$ interior penalty method.
 We also take $\theta$ to be $0.5$ in the
 D\"orfler marking strategy.
 \par
  We will consider three examples.  The first one concerns
   a problem on the unit square with known
  exact solution.  The second one is about a problem on a $L$-shaped domain with a two dimensional
  coincidence set (where $u=\psi$)
  that has a fairly smooth boundary.  The third example is also about
   a problem on a $L$-shaped domain
  but with a coincidence set that is one dimensional.   For the second and third examples
   where the exact solution is not known, we estimate the error
 $\|u-u_\ell\|_\ell$ by using a reference solution computed on the mesh obtained by
 a uniform refinement of the last mesh generated by the refinement procedure.
 \par
  In each of the experiment for the adaptive algorithm, we will present figures that display
   the convergence histories for
  $\|u-u_\ell\|_\ell$ and $\eta_\ell$, and for the quantities $Q_{\ell,1}$ and $Q_{\ell,2}$ defined in
  \eqref{eq:MaxJump} and \eqref{eq:Violation}.  We also present tables that contain
  numerical results for the quantity  $\Lambda_\ell$ defined in \eqref{eq:LambdaEllDef}
  and examples of adaptively generated meshes.
\subsection{Example 1} \label{subsec:Example1}
 In this example we consider an obstacle problem on the
 unit square $\O=(-0.5,0.5)^2$ from \cite[Example~1]{BSZZ:2012:Kirchhoff} with $f=0$, $\psi=1-|x|^2$ and nonhomogeneous
 boundary conditions,
 whose exact solution is given by
\begin{equation*}
	u(x)=\begin{cases}
	C_1|x|^2 \ln(|x|) + C_2|x|^2 + C_3\ln(|x|)+C_4 & \qquad r_0 < |x|\\[6pt]
	1-|x|^2 & \qquad |x|\leq r_0
\end{cases},
\end{equation*}
 where
   $r_0\approx 0.18134453$,
   $C_1\approx 0.52504063$,
   $C_2\approx -0.62860905$,
   $C_3\approx 0.017266401$ and
   $C_4\approx 1.0467463$.
\par
 For this example the coincidence set
  is the disc centered at the origin with radius $r_0$ whose
 boundary is the free boundary, and we have
 $|\lambda|=8\pi C_1\approx 13.1957.$
\par
  Due to the nonhomogeneous boundary conditions,  we modify
  the discrete obstacle problem (cf. \cite{BSZZ:2012:Kirchhoff}) to find
\begin{equation*}
   u_h = \argmin_{v \in K_h}\Big[\frac{1}{2} a_h(v,v) - F(v)\Big],
\end{equation*}
 where $K_h= \{ v \in V_h : v-\Pi_h u \in H^1_0(\Omega), \, v(p) \geq \psi(p)
  \quad \forall\,p \in \cV_h \}$,
\begin{align*}
 F(v) &= (f,v) + \sum_{e \in \cE_h^b} \int_e
  \left( \Mean{v} + \frac{\sigma}{|e|}
  \Jump{v} \right)
  \Jump{u}ds,
\end{align*}
 and $\cE_h^b$ is the set of the edges of $\cT_h$ that are on the boundary of $\O$.
 We also modify
 the residual based error estimator:
\begin{equation*}
  \eta_h=\Big(\ESumI\etaeO^2+\ESumI (\etaeTwo^2+\etaeThree^2)+\TSum\etaT^2
        +\sum_{e\in\cE_h^b}\sigma^2|e|^{-1}\|\jump{(u_h-u)}\|_{L_2(e)}^2\Big)^\frac12.
\end{equation*}
\par
 In the first experiment we
  solve the discrete problem with the $P_2$ element
   on uniform meshes and
 compute the quantity
\begin{align}\label{eq:QDef}
  Q_h&=C\Big(\eta_h+|\lambda|^\frac12\sqrt{\max_{T\in\cT_h}\hT\sum_{ e\in\tilde\cE_T}|e|^{-1/2}\|\jump{u_h}\|_{L_2(e)}}\,\Big)\\
   &\hspace{40pt}+ |\lambda|^\frac12\|(\psi-u_h)^+\|_{L_\infty(\O)}^\frac12\notag
\end{align}
 that appears on the right-hand side of \eqref{eq:ReliableII}, with $C=0.32$ and
 $|\lambda|=13.196$.  The results for $\|u-u_h\|_h/Q_h$ (cf.
   Table~\ref{table:ReliabilityTest})
 clearly demonstrate the estimate \eqref{eq:ReliableII}.
\begin{table}[hh]
  \begin{tabular}{c|c|c|c|c|c|c|c|c|c|c}
    $h$ & $2^{-1}$ & $2^{-2}$ &$2^{-3}$ & $2^{-4}$ &$2^{-5}$ &$2^{-6}$ &$2^{-7}$&
    $2^{-8}$ &$2^{-9}$ & $2^{-10}$\\
    \hline
    $\|u-u_h\|_h/Q_h$ & $0.93$ & $0.94$ & $0.96$ & $0.97$ & $0.97$ & $0.97$ &$0.98$&
    $0.98$ & $0.99$ & $1.04$
  \end{tabular}
  \par\medskip
   \caption{Numerical results for the estimate \eqref{eq:ReliableII}}
\label{table:ReliabilityTest}
\end{table}
%
%
\par
 In the second experiment we solve the discrete
 obstacle problem with the cubic element
 on uniform and adaptive meshes.   We observe optimal (resp. suboptimal) convergence
  rate  for adaptive (resp. uniform) meshes in Figure~\ref{fig:P3Example1}\hspace{1pt}(a)  and
  also the reliability of $\eta_\ell$.   Furthermore the optimal $O(N_\ell^{-1})$ convergence
 rate of $\|u-u_\ell\|_\ell$  is justified by
 Figure~\ref{fig:P3Example1}\hspace{1pt}(b) and Lemma~\ref{lem:AsymptoticConvergenceRate}.
\begin{figure}[hh]
  \subfigure[]
  {\includegraphics[width=0.4\linewidth]{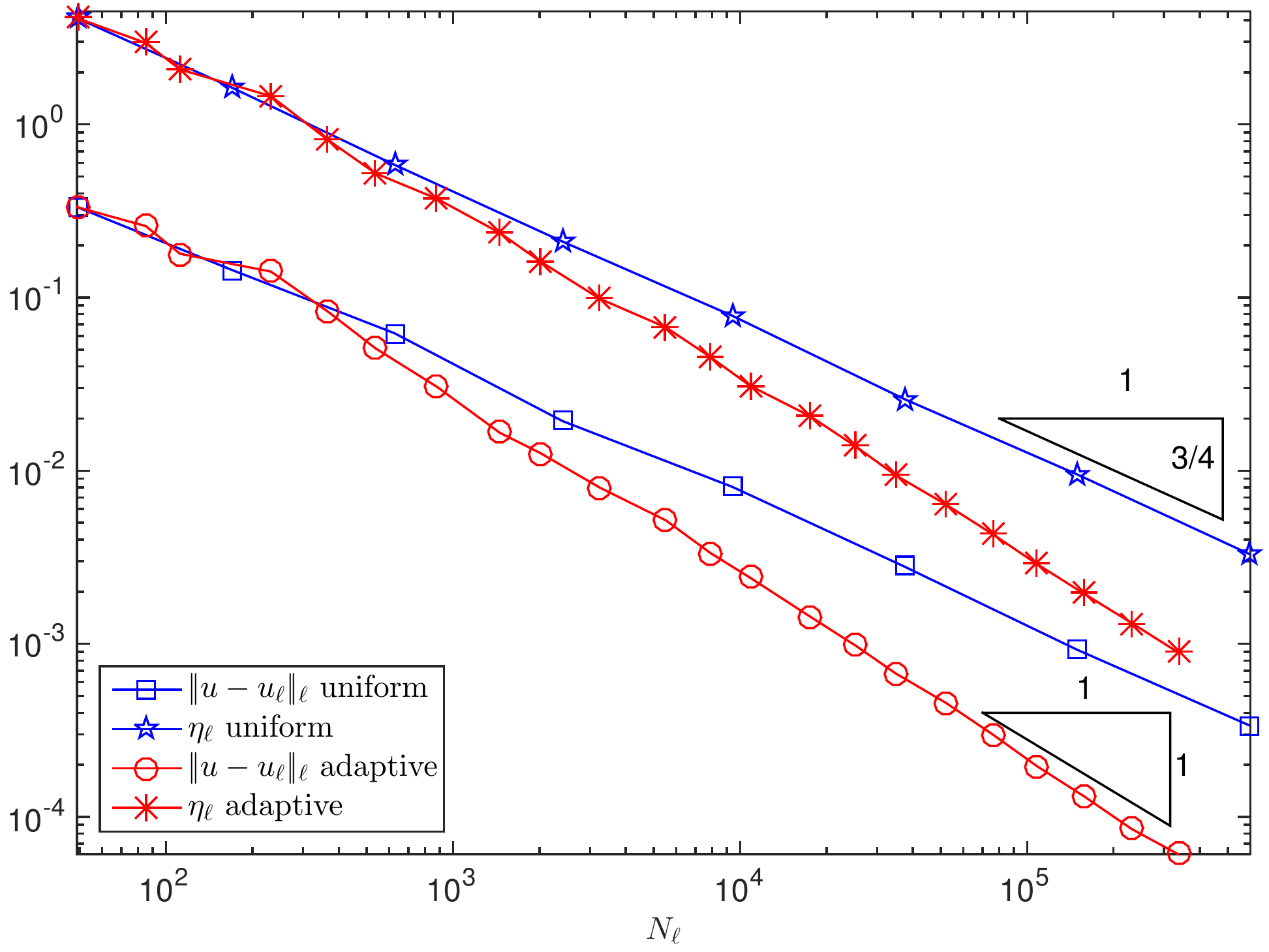}}
  \subfigure[]
  {\includegraphics[width=0.4\linewidth]{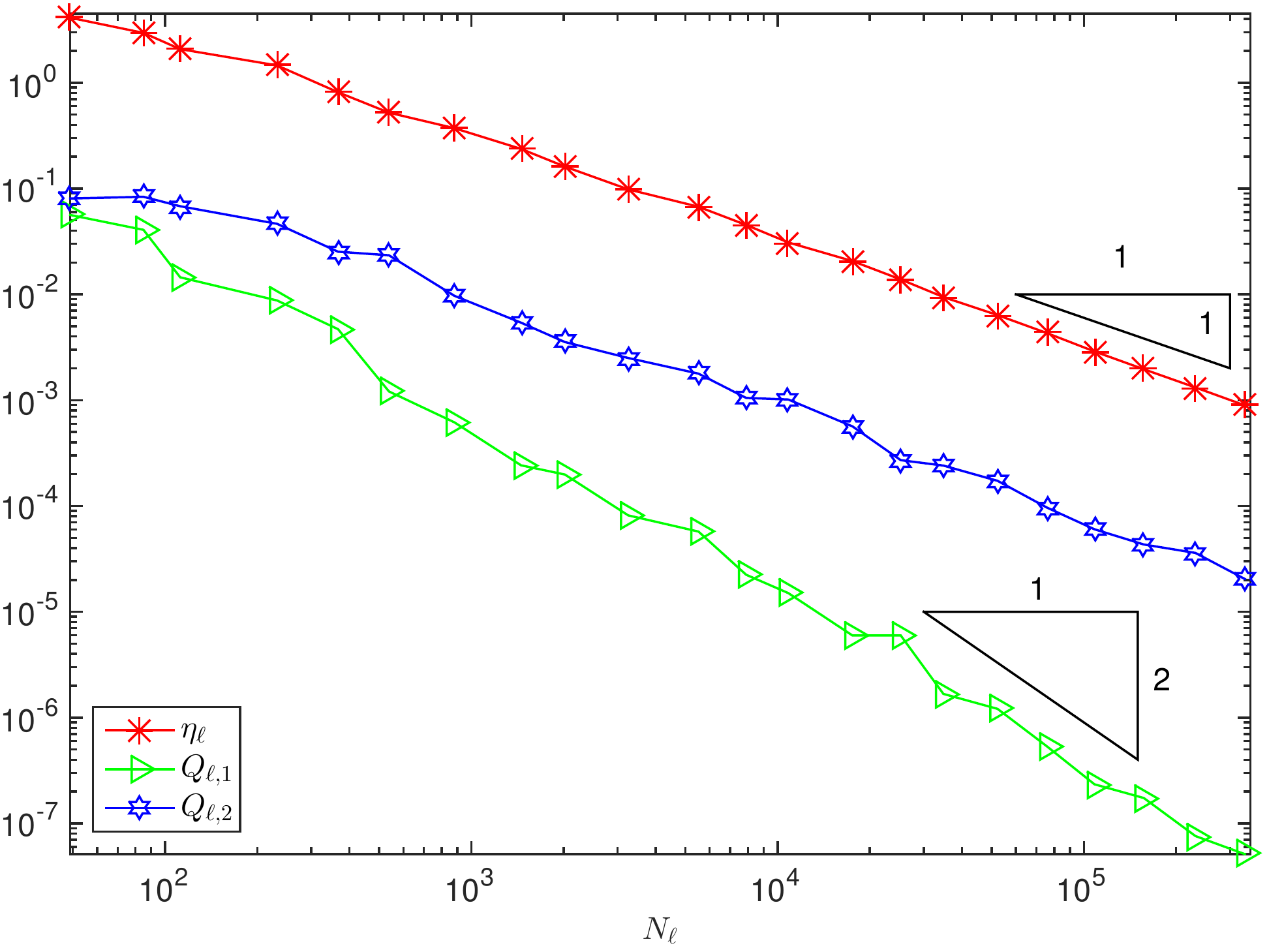}}
\caption{Convergence histories for the cubic $C^0$ interior penalty method
  for Example~1: (a) $\|u-u_\ell\|_\ell$ and
  $\eta_\ell$, (b)  $\eta_\ell$, $Q_{\ell,1}$ and $Q_{\ell,2}$}
\label{fig:P3Example1}
\end{figure}
\par
 According to Lemma~\ref{lem:LMTest} and
  Figure~\ref{fig:P3Example1}\hspace{1pt}(b), the magnitude of $\Lambda_\ell$
  should be $O(N_\ell^{-1})$.  This is confirmed by the
  results in Table~\ref{table:LMExample1}, where $N_\ell$ increases from $N_0=49$ to
  $N_{20}=231328$.
\begin{table}[hh]
\begin{tabular}{c|c|c|c|c|c|c|c|c|c|c|c}
  $\ell$ & $0$ & $1$ &$2$ & $3$ & $4$ & $5$ &$6$ & $7$ & $8$ &$9$ & $10$\\
  \hline
  $\Lambda_\ell N_\ell$ & $155$ & $250$ & $60.5$ & $214$ & $164$ & $99.6$ & $101$
  & $75.8$ & $21.8$ & $31.9$ & $17.9$ \\
  \hline  \hline
   $\ell$ & $11$ & $12$ &$13$ & $14$ & $15$ & $16$ & $17$ & $18$& $19$ &$20$ \\
   \hline
   $\Lambda_\ell N_\ell$ & $12.2$ & $1.48$ & $23.8$ & $5.37$ &$0.403$ & $4.65$
   & $20.5$ &$51.1$ & $259$ & $730$
\end{tabular}
\medskip
\caption{$\Lambda_\ell N_\ell$ for the adaptive cubic $C^0$ interior penalty method for
  Example~1}
 \label{table:LMExample1}
\end{table}
\par
 An adaptive mesh with roughly 3000 nodes is depicted in Figure~\ref{fig:MeshExp2} and
 strong refinement near the free boundary is observed.
\begin{figure}[hh]
\begin{center}
\includegraphics[width=0.32\linewidth]{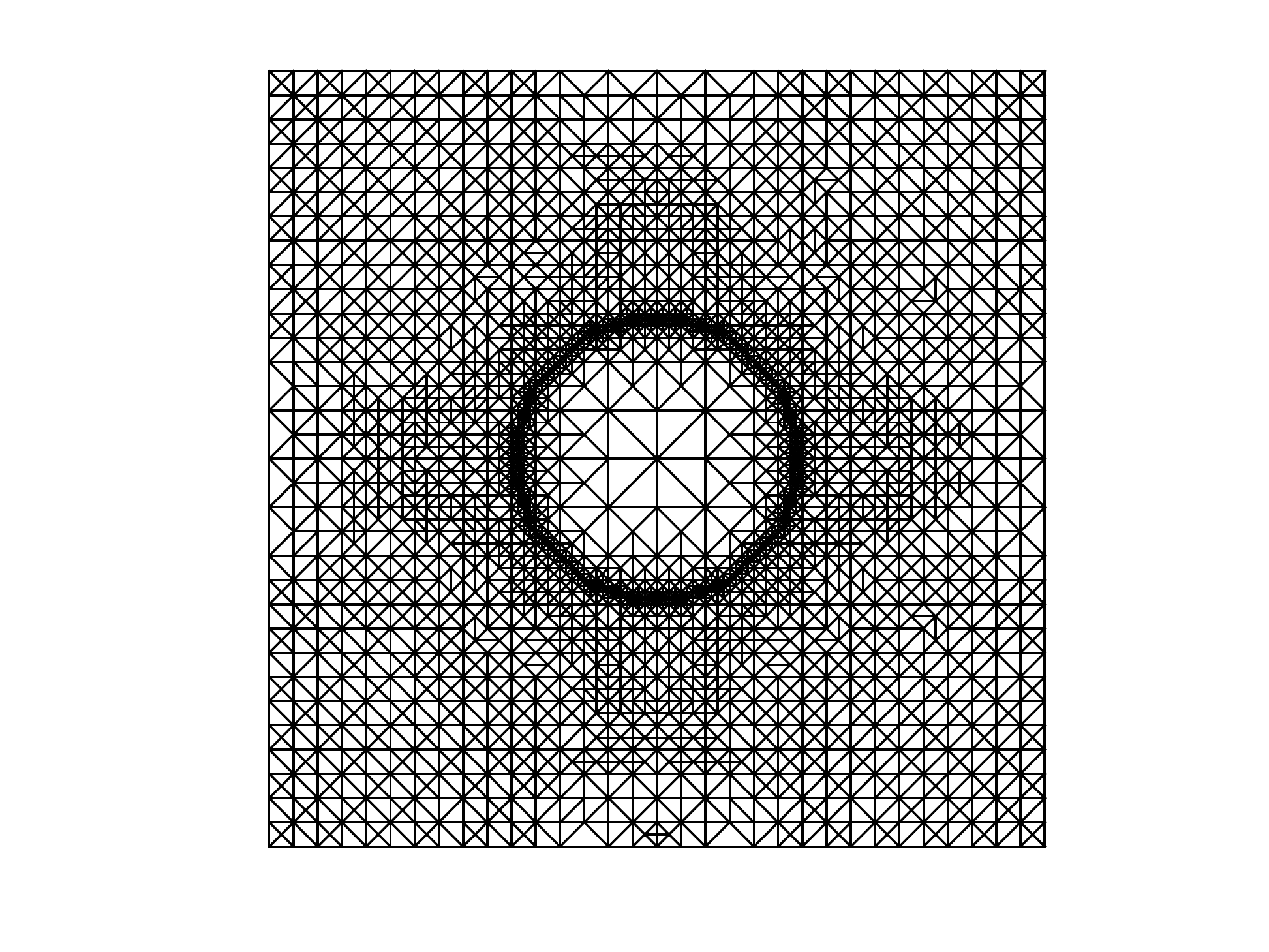}
\end{center}
\caption{Adaptive mesh 
 for the cubic $C^0$ interior penalty method for Example~1}
\label{fig:MeshExp2}
\end{figure}
\subsection{Example 2}\label{subsec:Example2}
 In this example we consider the obstacle problem
 from \cite[Example~4]{BSZZ:2012:Kirchhoff} for a clamped plate occupying the $L$-shaped domain
 $\O=(-0.5,0.5)^2\setminus[0,0.5]^2$  with $f=0$ and
 $\d \psi(x)=1-\Big[\frac{(x_1+1/4)^2}{0.2^2} + \frac{x_2^2}{0.35^2}\Big]$.
 The coincidence set for this problem is presented in
 Figure~\ref{fig:ContactL1}\hspace{1pt}(a).
\begin{figure}[hh]
\subfigure[]
{\includegraphics[width=0.3\linewidth]{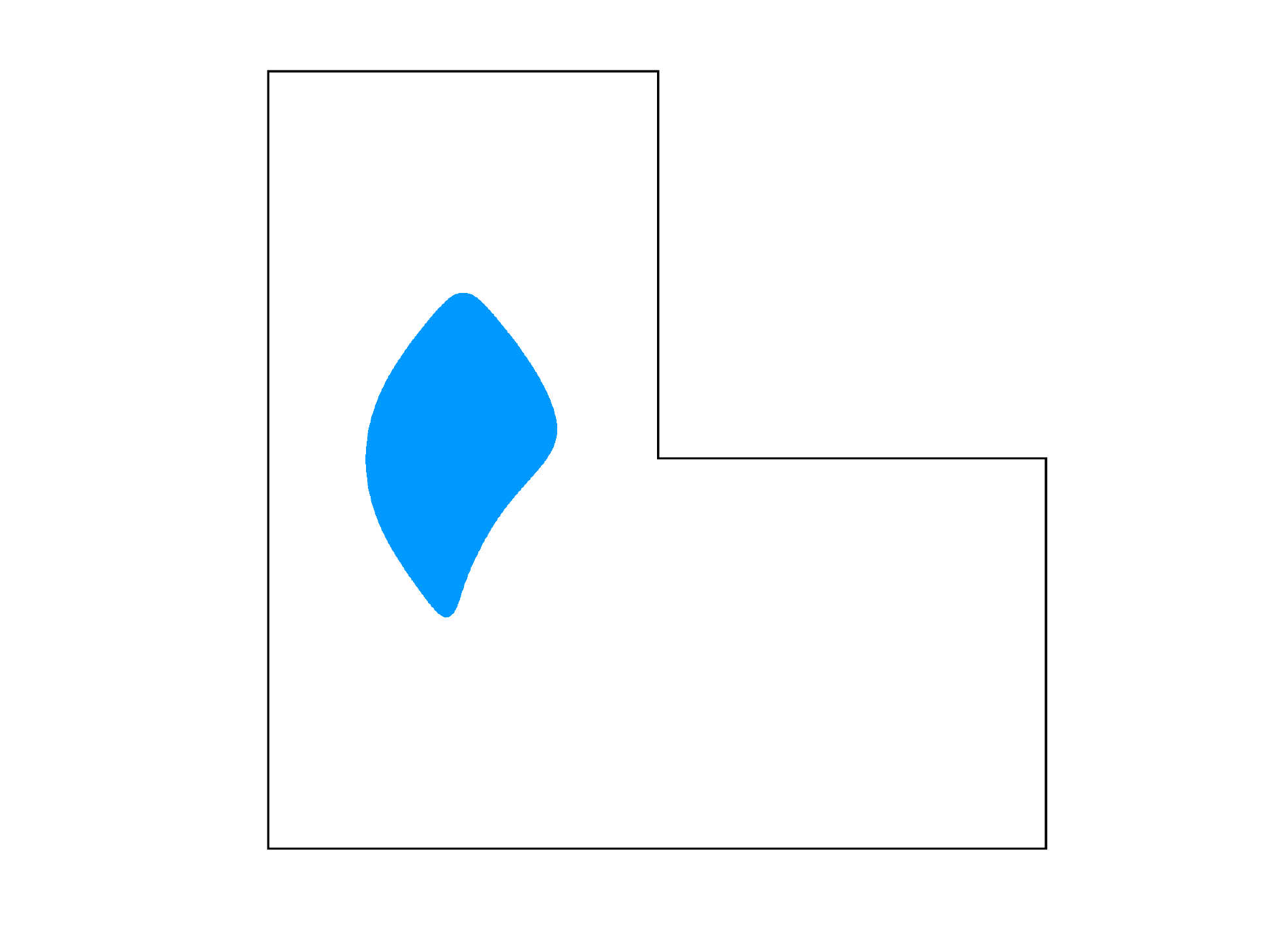}}
\subfigure[]
{\includegraphics[width=0.3\linewidth]{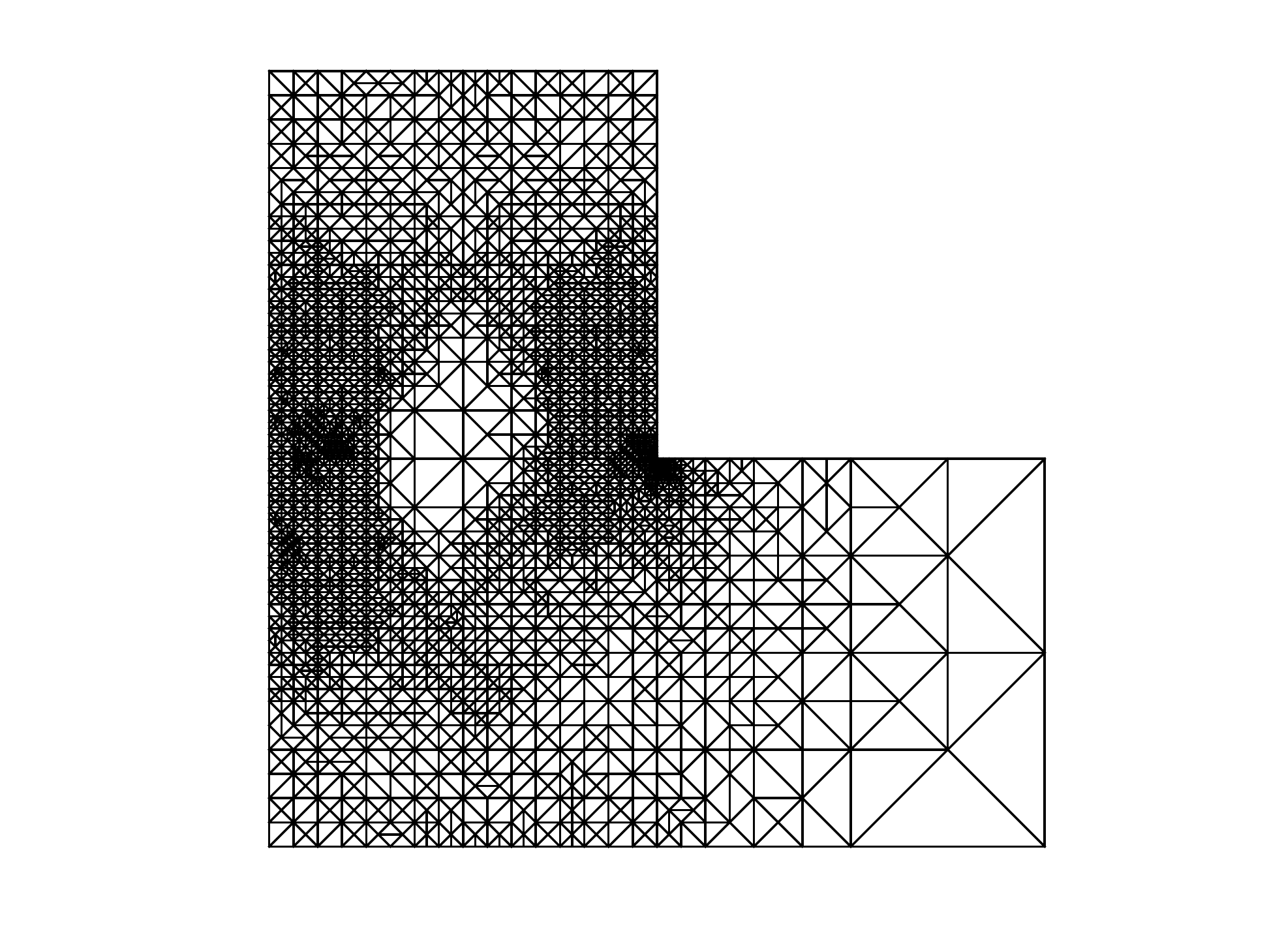}}
\subfigure[]
{\includegraphics[width=0.3\linewidth]{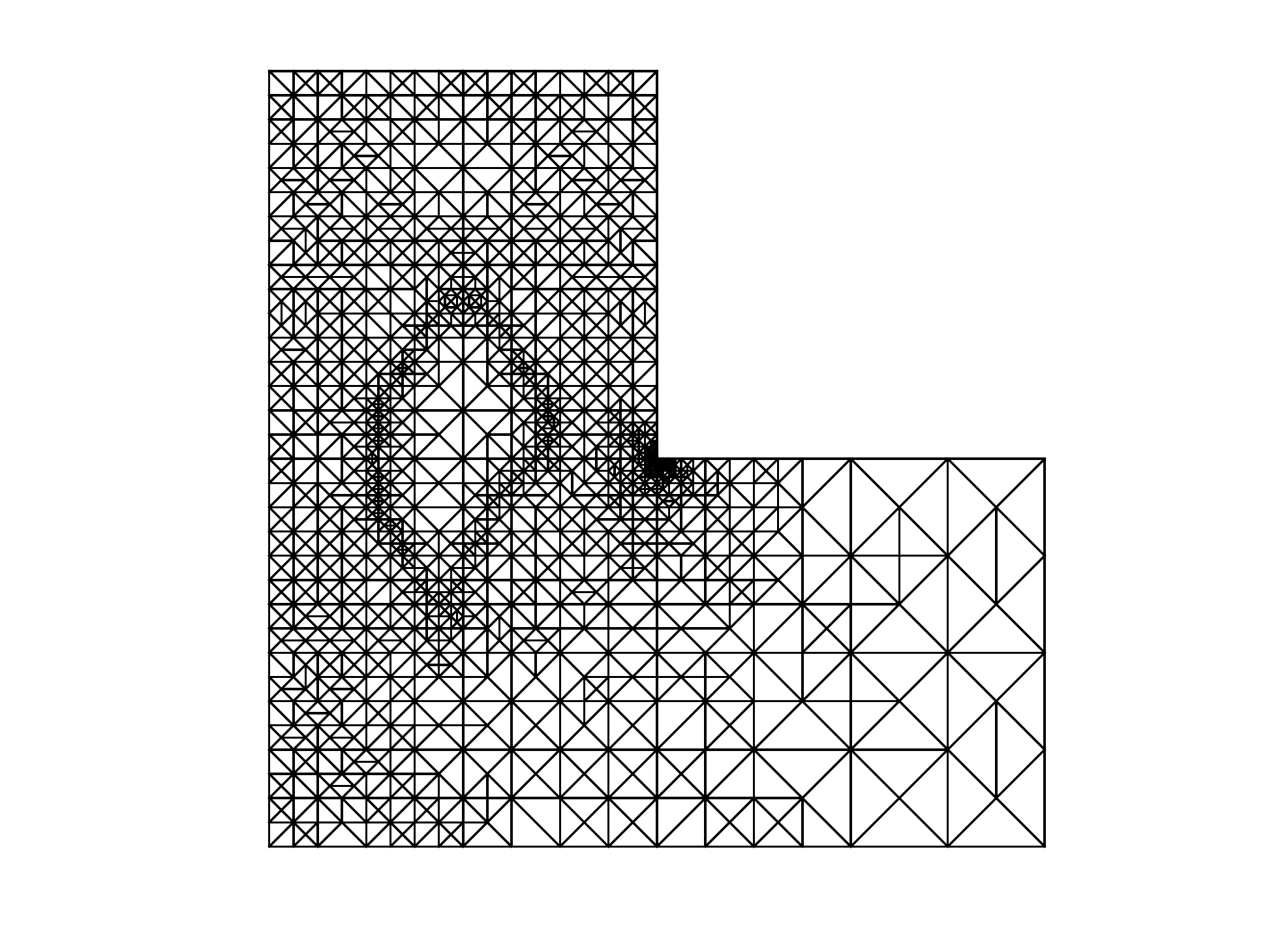}}
 \caption{$L$-shaped domain for Example~2: (a) Coincidence set for the obstacle problem
 (b) Adaptive mesh with  $\approx 3000$ nodes
  for the $P_2$ element
 (c) Adaptive mesh with  $\approx 5000$ nodes for the $P_3$ element}
 \label{fig:ContactL1}
\end{figure}
\par
 In the first experiment
 we solve the discrete obstacle problem with the $P_2$ element
 on uniform and adaptive meshes.   Optimal (resp. suboptimal)
 convergence rate for adaptive (resp. uniform) meshes and the reliability of $\eta_\ell$ are
 observed in
 Figure~\ref{fig:P2Example2}\hspace{1pt}(a), and
 the $O(N_\ell^{-1/2})$ convergence rate of $\|u-u_\ell\|_\ell$ is justified
 by Figure~\ref{fig:P2Example2}\hspace{1pt}(b) and Lemma~\ref{lem:AsymptoticConvergenceRate}.
\begin{figure}[!hh]
  \subfigure[]
   {\includegraphics[width=0.4\linewidth]{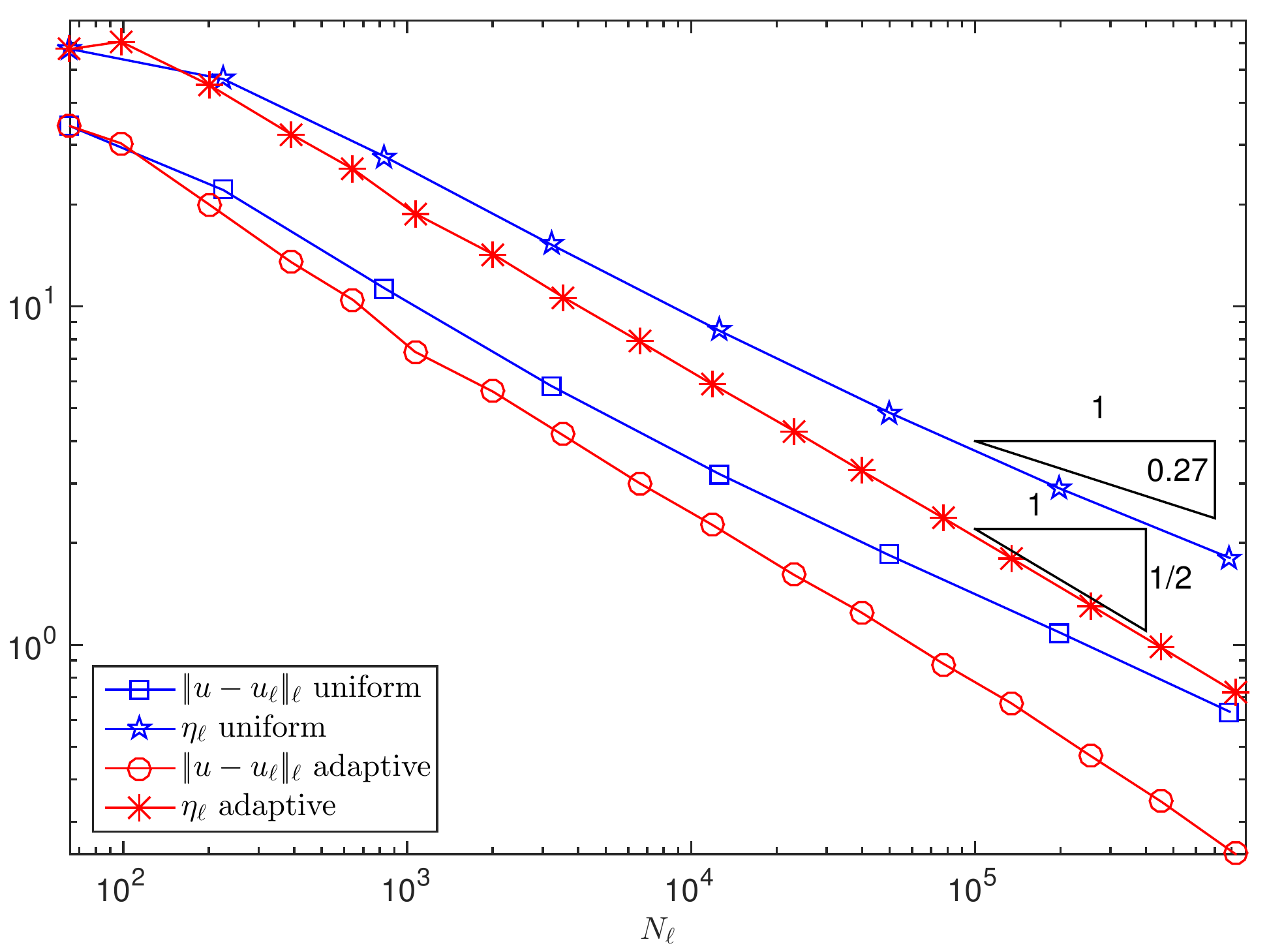}}
  \subfigure[]
  {\includegraphics[width=0.4\linewidth]{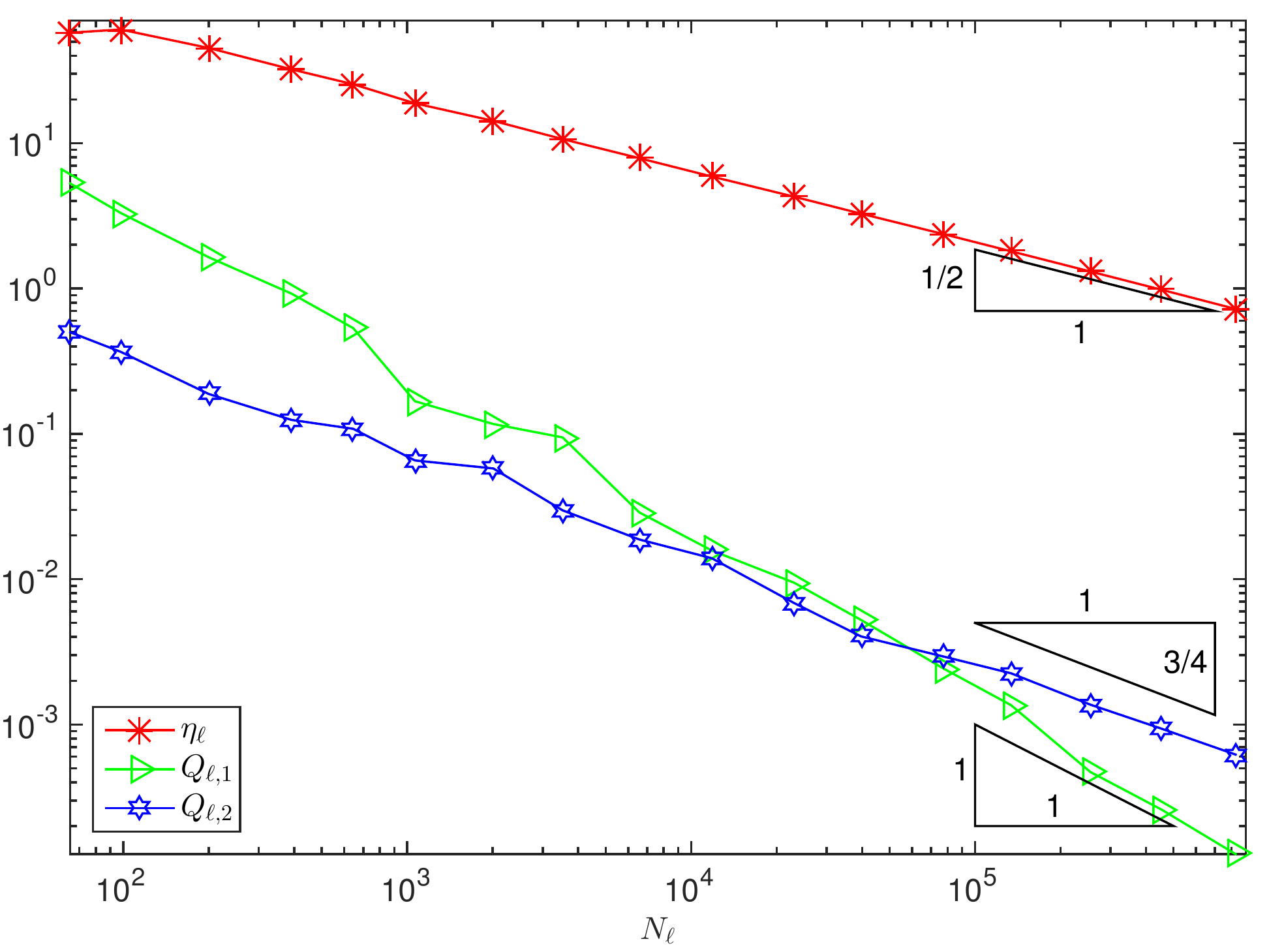}}
\caption{Convergence histories for the quadratic $C^0$ interior penalty method
  for Example~2: (a) $\|u-u_\ell\|_\ell$ and
  $\eta_\ell$, (b)  $\eta_\ell$, $Q_{\ell,1}$ and $Q_{\ell,2}$}
\label{fig:P2Example2}
\end{figure}
\par
 The $O(N_\ell^{-1/2})$ bound for $\Lambda_\ell$ predicted by
 Lemma~\ref{lem:LMTest} and  Figure~\ref{fig:P2Example2}\hspace{1pt}(b)
 is observed in Table~\ref{table:LMP2Example2}, where
 $N_\ell$ increases from $65$ to $827483$.
\begin{table}[hh]
\begin{tabular}{c|c|c|c|c|c|c|c|c|c}
  $\ell$ & $0$ & $1$ &$2$ & $3$ & $4$ & $5$ &$6$ & $7$ & $8$ \\
  \hline
  $\Lambda_\ell N_\ell^{1/2}$ & $2715$ & $391$ & $637$ & $756$ & $1454$ & $654$ & $613$
  & $467$ & $411$  \\
  \hline  \hline
   $\ell$ & $9$ & $10$ &$11$ & $12$ & $13$ & $14$ & $15$ & $16$&  \\
   \hline
   $\Lambda_\ell N_\ell^{1/2}$ & $360$ & $149$ &$255$ &$105$ & $144$ & $70$ & $72$ & $52$
\end{tabular}
\medskip
\caption{$\Lambda_\ell N_\ell^{1/2}$ for the adaptive quadratic $C^0$ interior penalty method for
  Example~2}
 \label{table:LMP2Example2}
\end{table}
\par
 An adaptive mesh with roughly 3000 nodes is displayed in
 Figure~\ref{fig:ContactL1}\hspace{1pt}(b), where
 we observe a strong refinement near the reentrant corner. In contrast the refinement near the
 free boundary is mild.  This is due to the fact that away from the reentrant corner
 the solution belongs to $H^3$ (cf. \cite{Frehse:1971:VarInequality,BR:1980:Biharmonic})
 and we are using the $P_2$ element.
%
\par
 In the second experiment we solve the obstacle problem
 with the $P_3$ element on
 uniform and adaptive meshes.
  We observe optimal (resp. suboptimal) convergence rate for adaptive (resp. uniform) meshes
  in Figure~\ref{fig:P3Example2}\hspace{1pt}(a) and that $\eta_\ell$ is reliable in both cases.
 Moreover the $O(N_\ell^{-1})$ convergence rate of $\|u-u_\ell\|_\ell$ is
 justified by  Figure~\ref{fig:P3Example2}\hspace{1pt}(b) and Lemma~\ref{lem:AsymptoticConvergenceRate}.
\begin{figure}[!hh]
  \subfigure[]
  {\includegraphics[width=0.4\linewidth]{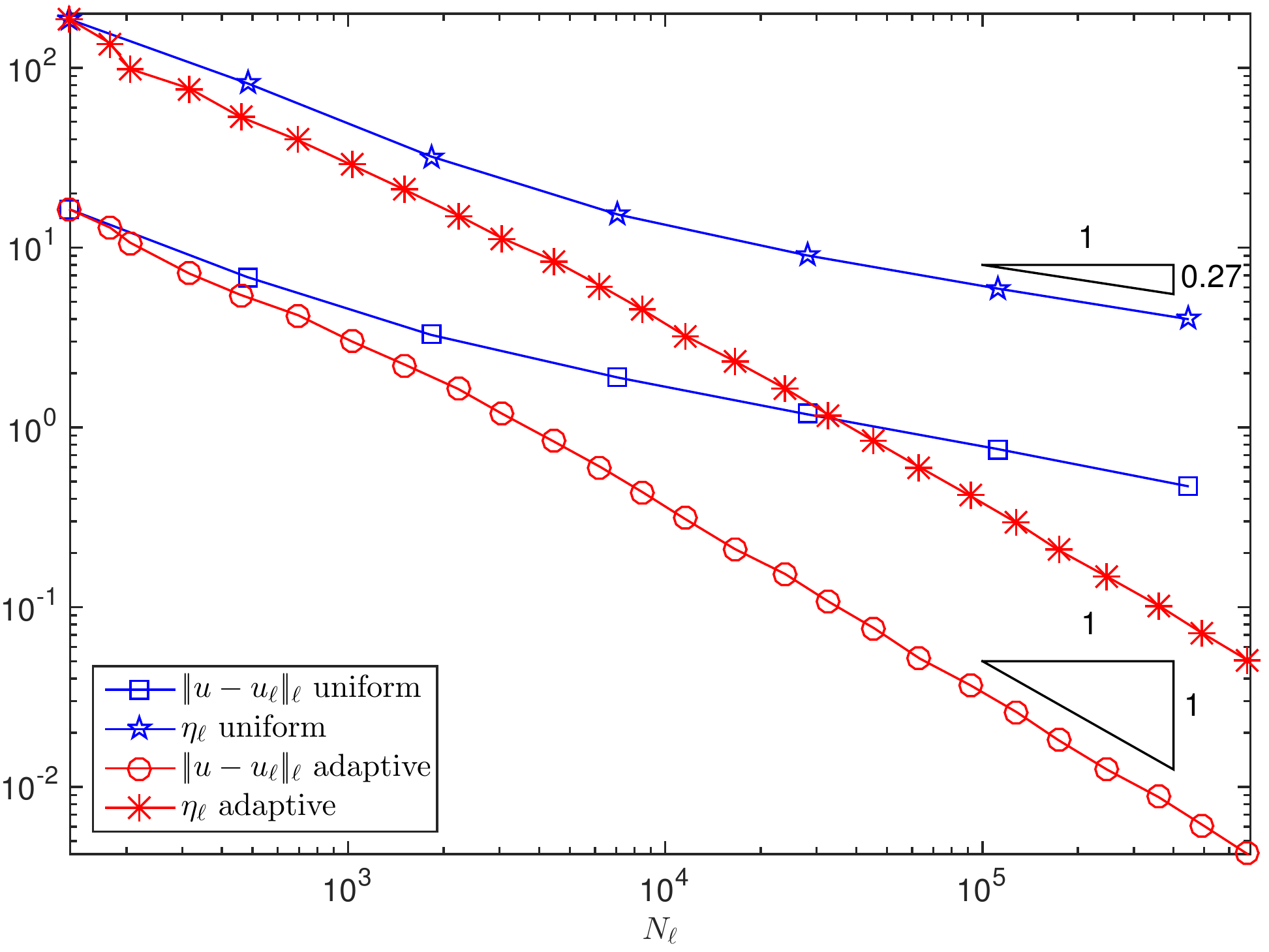}}
  \subfigure[]
  {\includegraphics[width=0.4\linewidth]{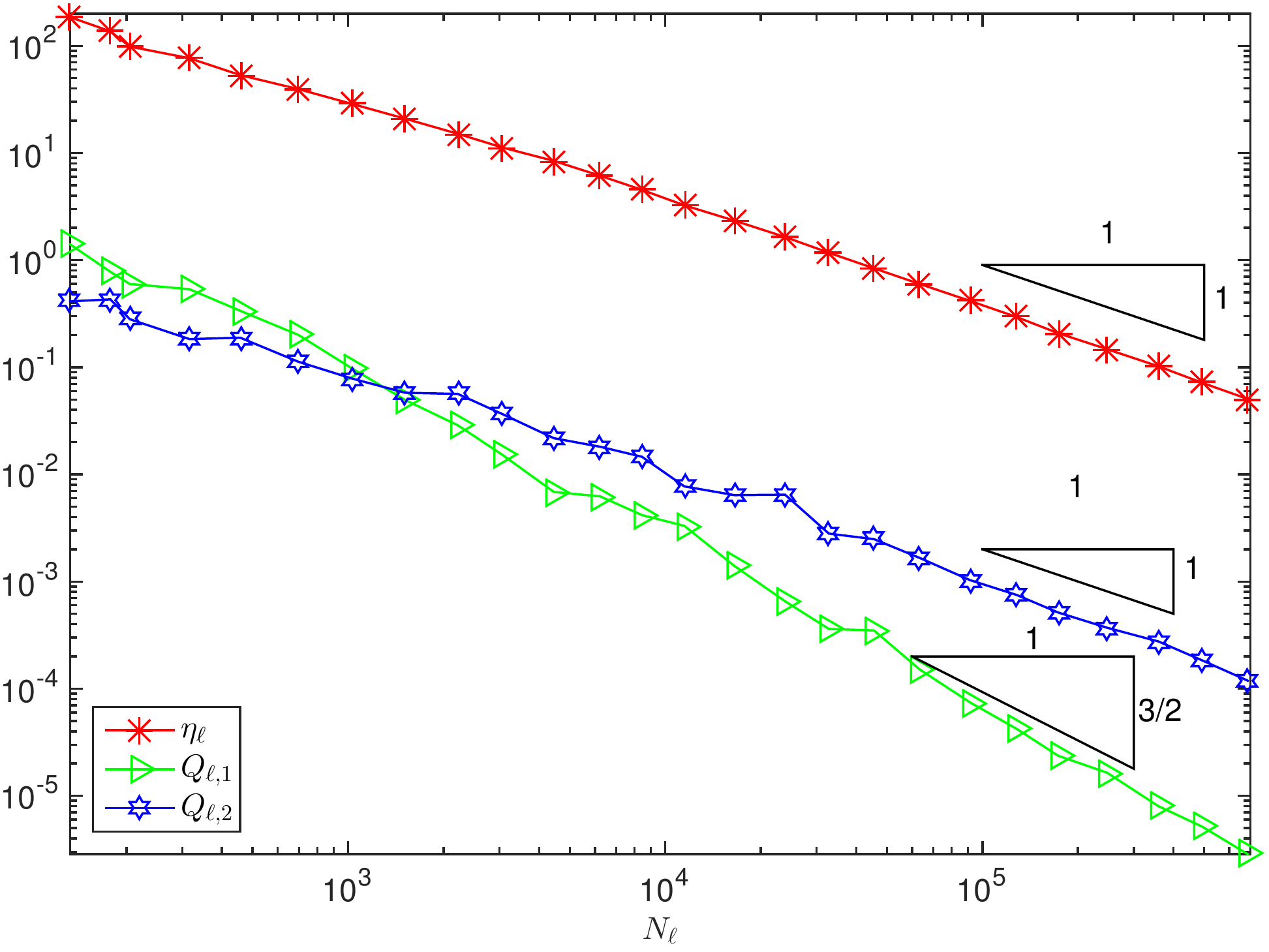}}
\caption{Convergence histories for the cubic $C^0$ interior penalty method
  for Example~2: (a) $\|u-u_\ell\|_\ell$ and
  $\eta_\ell$, (b)  $\eta_\ell$, $Q_{\ell,1}$ and $Q_{\ell,2}$}
\label{fig:P3Example2}
\end{figure}
\par
 The results for $\Lambda_\ell$ are reported in Table~\ref{table:LMP3Example2},
 where the $O(N_\ell^{-1})$ bound for $\Lambda_\ell$ predicted by
 Lemma~\ref{lem:LMTest} and Figure~\ref{fig:P3Example2}\hspace{1pt}(b) can be observed.
 Note that there are large oscillations at the beginning before the coincidence has been
 captured by the adaptive mesh.  Here $N_\ell$ increases from
 $N_0=133$ to $N_{22}=358792$.
\begin{table}[hh]
\begin{tabular}{c|c|c|c|c|c|c|c|c}
  $\ell$ & $0$ & $1$ &$2$ & $3$ & $4$ & $5$ &$6$ & $7$  \\
  \hline
  $\Lambda_\ell N_\ell$ & $15398$ & $16877$ & $2893$ & $1035$ & $11806$& $8925$ & $15493$ & $5993$
   \\
  \hline  \hline
   $\ell$ &  $8$ & $9$ & $10$ & $11$ & $12$ &$13$ & $14$ & $15$\\
   \hline
   $\Lambda_\ell N_\ell$ &  $1048$ & $5162$ & $3544$ & $3271$ & $1362$ &$119$ & $580$ & $778$ \\
   \hline\hline
   $\ell$ &  $16$ & $17$ & $18$& $19$ &$20$  & $20$ &$21$ &$22$\\
   \hline
   $\Lambda_\ell N_\ell$ & $77$ & $96$ & $68$ & $92$ & $147$ & $116$ &$754$ & $885$
\end{tabular}
\medskip
\caption{$\Lambda_\ell N_\ell$ for the adaptive cubic $C^0$ interior penalty method for
  Example~2}
 \label{table:LMP3Example2}
\end{table}

\par
 An adaptive mesh with roughly 5000 nodes is displayed in Figure~\ref{fig:ContactL1}\hspace{1pt}(c),
  where we observe strong refinement near both the reentrant corner and the free boundary.
\goodbreak
\subsection{Example 3}\label{subsec:Example3}
 In this example we consider the obstacle problem on the $L$-shaped domain
  $\O=(-0.5,0.5)^2\setminus[0,0.5]^2$ with
 $$
\psi(x)= - [\sin(2 \pi (x_1+0.5) (x_2+0.5)) \sin(4 \pi (x_1-0.5) (x_2-0.5))] - 0.35
 $$
 and
\begin{equation*}
f(x)=
\begin{cases}
10^3 \Big( \frac 12 e^{(x_1+0.25)^2+(x_2+0.25)^2} \Big) &\qquad x_1\leq 0, x_2 > 0 \\
0  &\qquad x_1 \leq 0, x_2 \leq 0 \\
10^3 \Big( \frac{1}{2} + [(x_1-0.25)^2+(x_2+0.25)^2]^{3/2} \Big) &\qquad x_1 \geq 0, x_2 \leq 0
\end{cases}\,.
\end{equation*}
 For this example, the coincidence set is one dimensional
 (cf. Figure~\ref{fig:ContactSetL2}\hspace{1pt}(a)).
\begin{figure}[hh]
\subfigure[]
   {\includegraphics[width=0.3\linewidth]{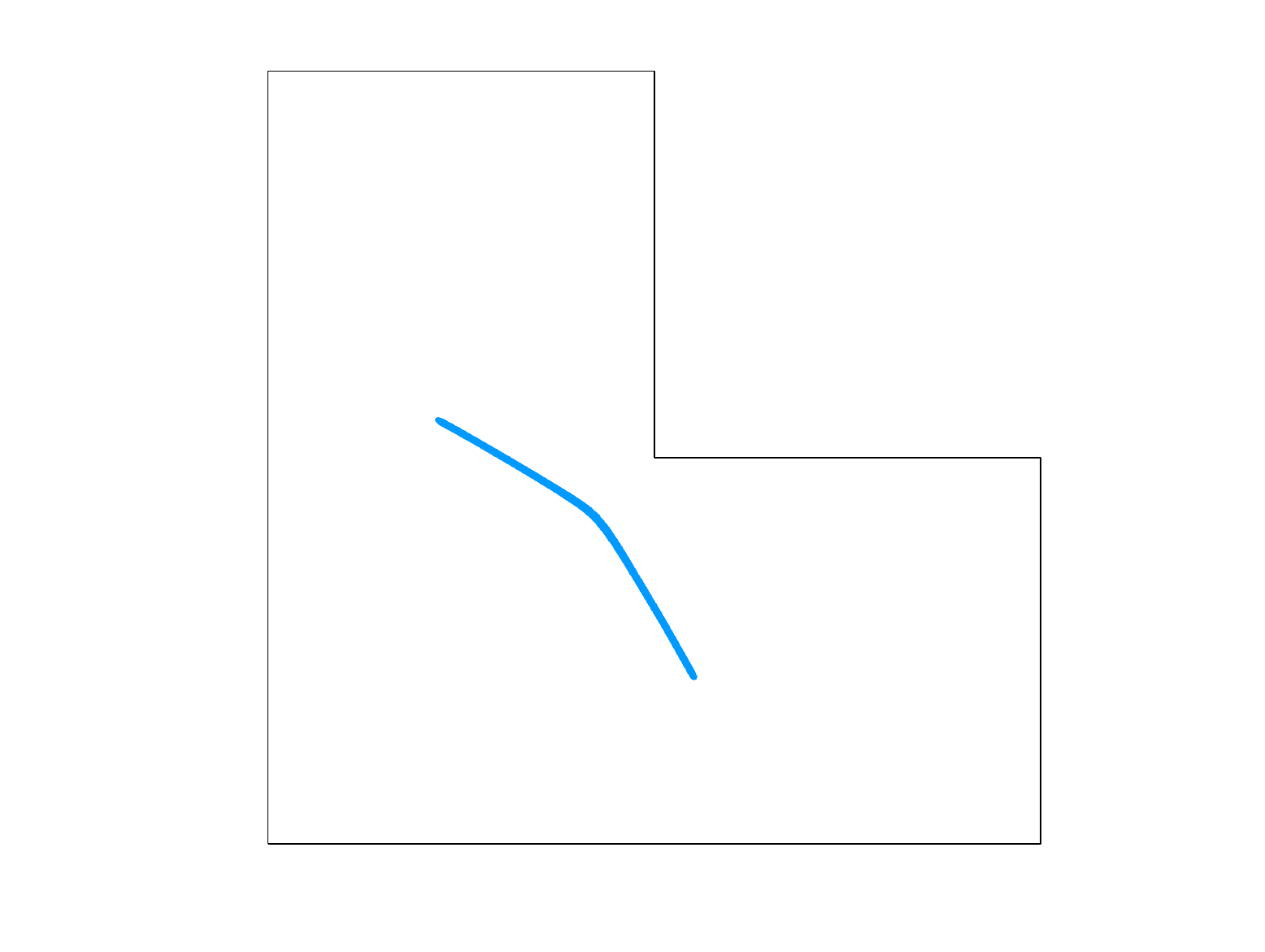}}
\subfigure[]
  {\includegraphics[width=0.3\linewidth]{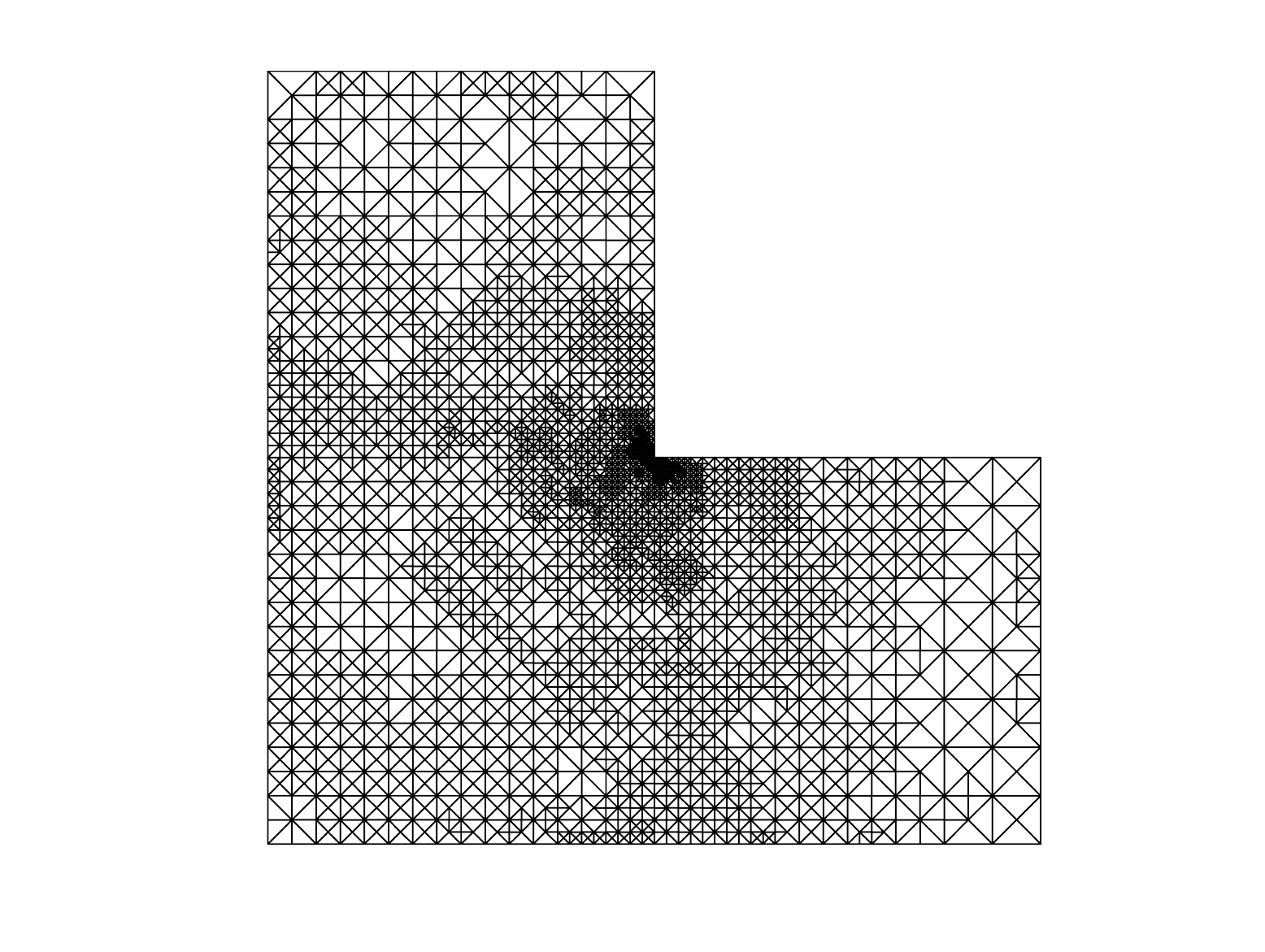}}
\subfigure[]
  {\includegraphics[width=0.3\linewidth]{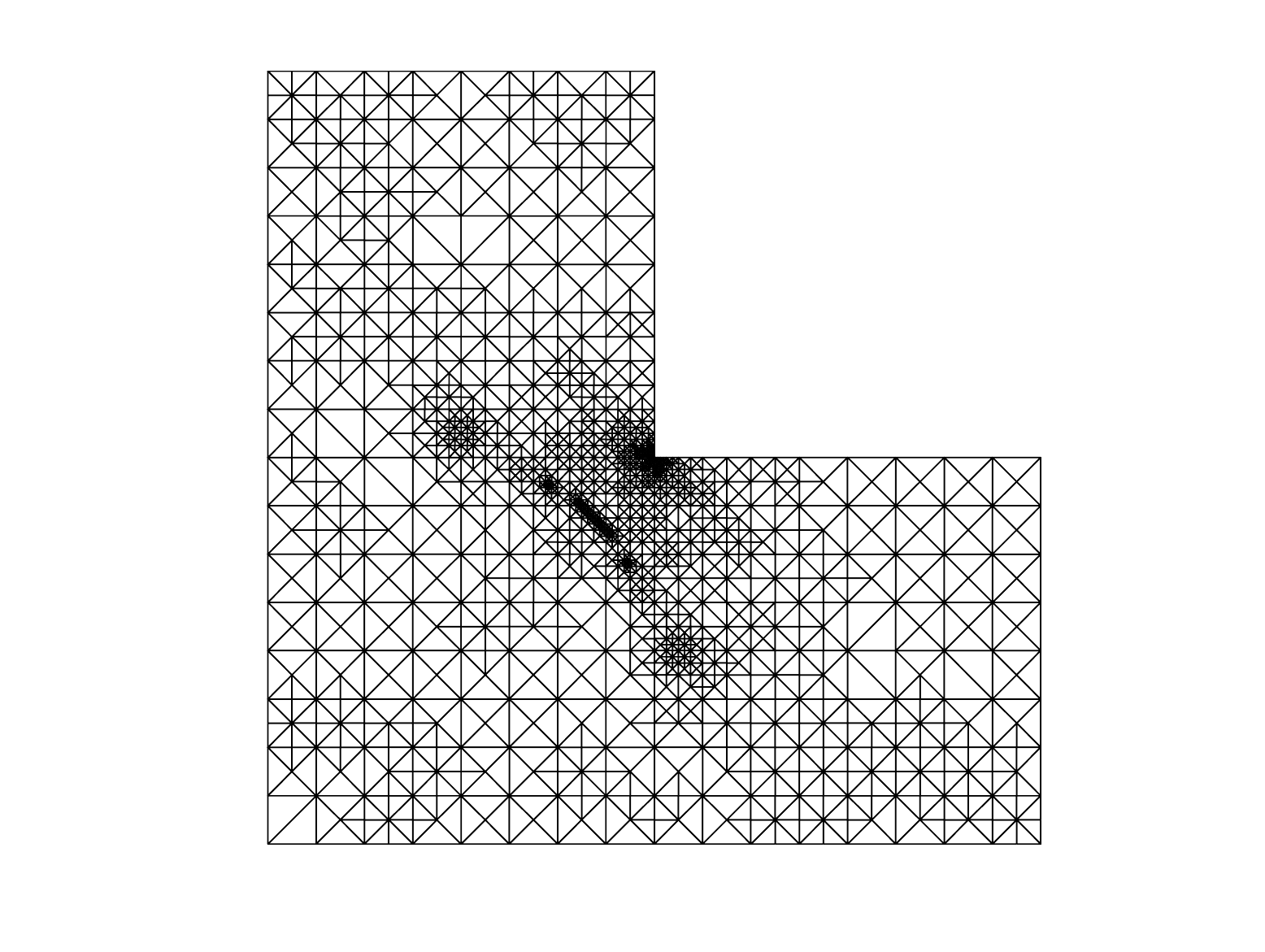}}
 \caption{$L$-shaped domain for Example~3: (a) Coincidence set for the obstacle problem
 (b) Adaptive mesh with  11062 dof
  for the $P_2$ element
 (c) Adaptive mesh with  12841 dof for the $P_3$ element}
\label{fig:ContactSetL2}
\end{figure}
\par
 In the first experiment we solve the obstacle problem with the $P_2$
  element on uniform and adaptive meshes.
  We observe optimal (resp. suboptimal) convergence rate for adaptive (resp. uniform) meshes
 in Figure~\ref{fig:P2Example3}\hspace{1pt}(a)
  and also the reliability of $\eta_\ell$.
  The $O(N_\ell^{-1/2})$ convergence rate of $\|u-u_\ell\|_\ell$ is confirmed by
  Figure~\ref{fig:P2Example3}\hspace{1pt}(b) and
 Lemma~\ref{lem:AsymptoticConvergenceRate}.
\begin{figure}[!hh]
\subfigure[]
   {\includegraphics[scale=0.35]{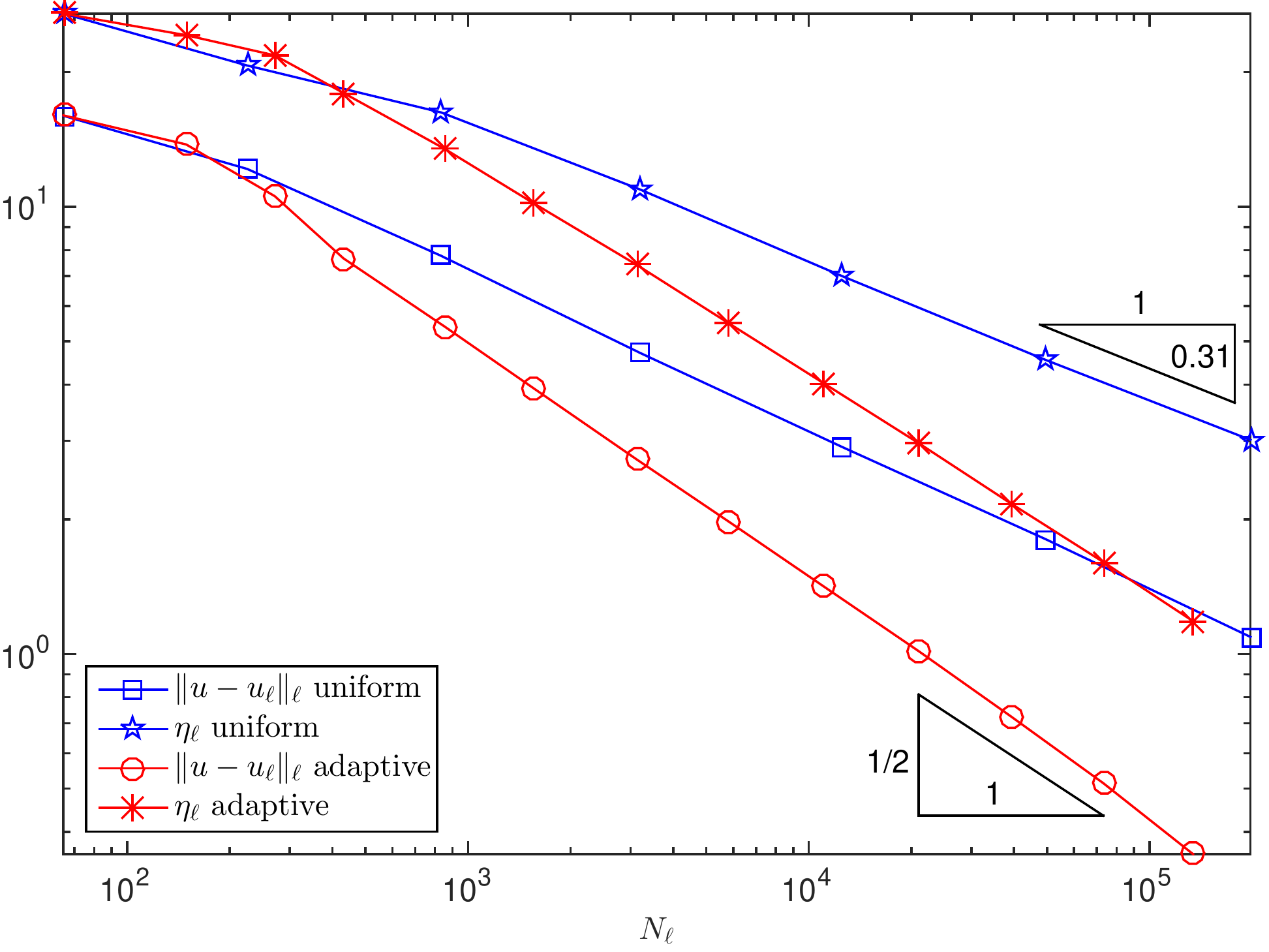}}
\subfigure[]
   {\includegraphics[scale=0.35]{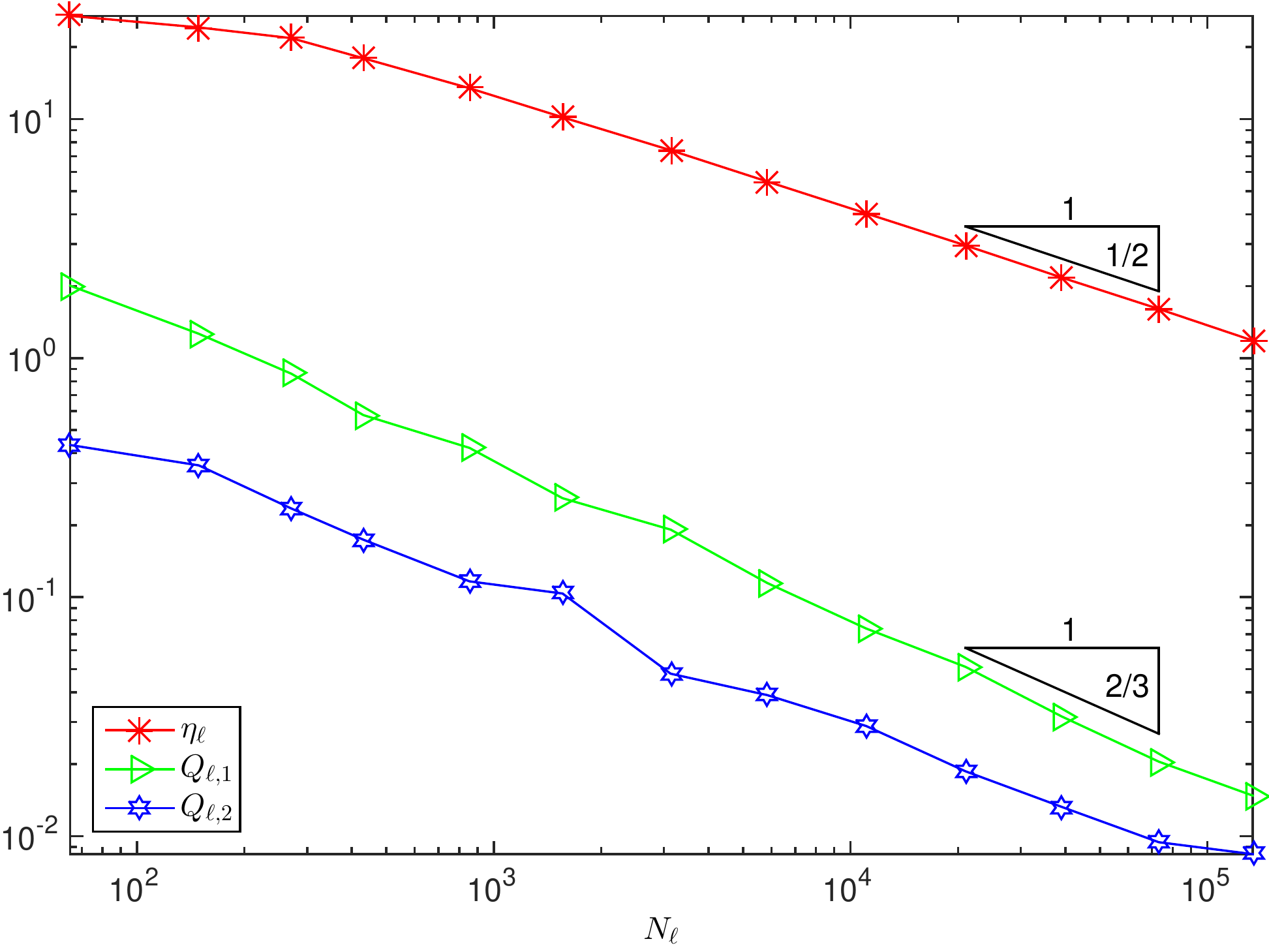}}
\caption{Convergence histories for the quadratic $C^0$ interior penalty method
  for Example~3: (a) $\|u-u_\ell\|_\ell$ and
  $\eta_\ell$, (b)  $\eta_\ell$, $Q_{\ell,1}$ and $Q_{\ell,2}$}
\label{fig:P2Example3}
\end{figure}
\par
 The results in Table~\ref{table:LMP2Example3} agrees with the
  $O(N_\ell^{-1/2})$ bound for $\Lambda_\ell$ that follows from
 Lemma~\ref{lem:LMTest} and Figure~\ref{fig:P2Example3}\hspace{1pt}(b).
 The number of dof increases from $N_0=65$ to
  $N_{12}=134096$.
\begin{table}[hh]
\begin{tabular}{c|c|c|c|c|c|c|c |c |c |c |c |c |c}
  $\ell$ & $0$ & $1$ &$2$ & $3$ & $4$ & $5$ &$6$ & $7$ & $8$ & $9$& $10$ &$11$ & $12$  \\
  \hline
  &&&&&&&&&&&&& \\[-11pt]
  $\Lambda_\ell N_\ell^{1/2}$ & $1151$ & $501$ & $92$ & $201$ & $120$ & $419$
   &$201$    &$98$ & $75$ & $34$ & $76$ &$40$ & $36$\\
\end{tabular}
\medskip
\caption{$\Lambda_\ell N_\ell^{1/2}$ for the adaptive quadratic $C^0$ interior penalty method for
  Example~3}
 \label{table:LMP2Example3}
\end{table}
\par
 An adaptive mesh with 11062 dof is depicted in
 Figure~\ref{fig:ContactSetL2}\hspace{1pt}(b), where
 we observe that the only strong refinement is around the reentrant corner.
  This is again due to the fact that
 away from the reentrant corner the solution belongs to $H^3$ and we are using the $P_2$ element.
\par
 In the second experiment we solve the obstacle problem
 with the $P_3$ element on
 uniform and adaptive meshes.
  We observe optimal (resp. suboptimal) convergence rate for adaptive (resp. uniform) meshes
 in Figure~\ref{fig:P3Example3}\hspace{1pt}(a) and also the reliability
  of $\eta_\ell$.  Furthermore the $O(N_\ell^{-1})$ convergence rate
  for $\|u-u_\ell\|_\ell$ is justified by Figure~\ref{fig:P2Example3}\hspace{1pt}(b) and
  Lemma~\ref{lem:AsymptoticConvergenceRate}.
\begin{figure}[!hh]
\subfigure[]
   {\includegraphics[scale=0.35]{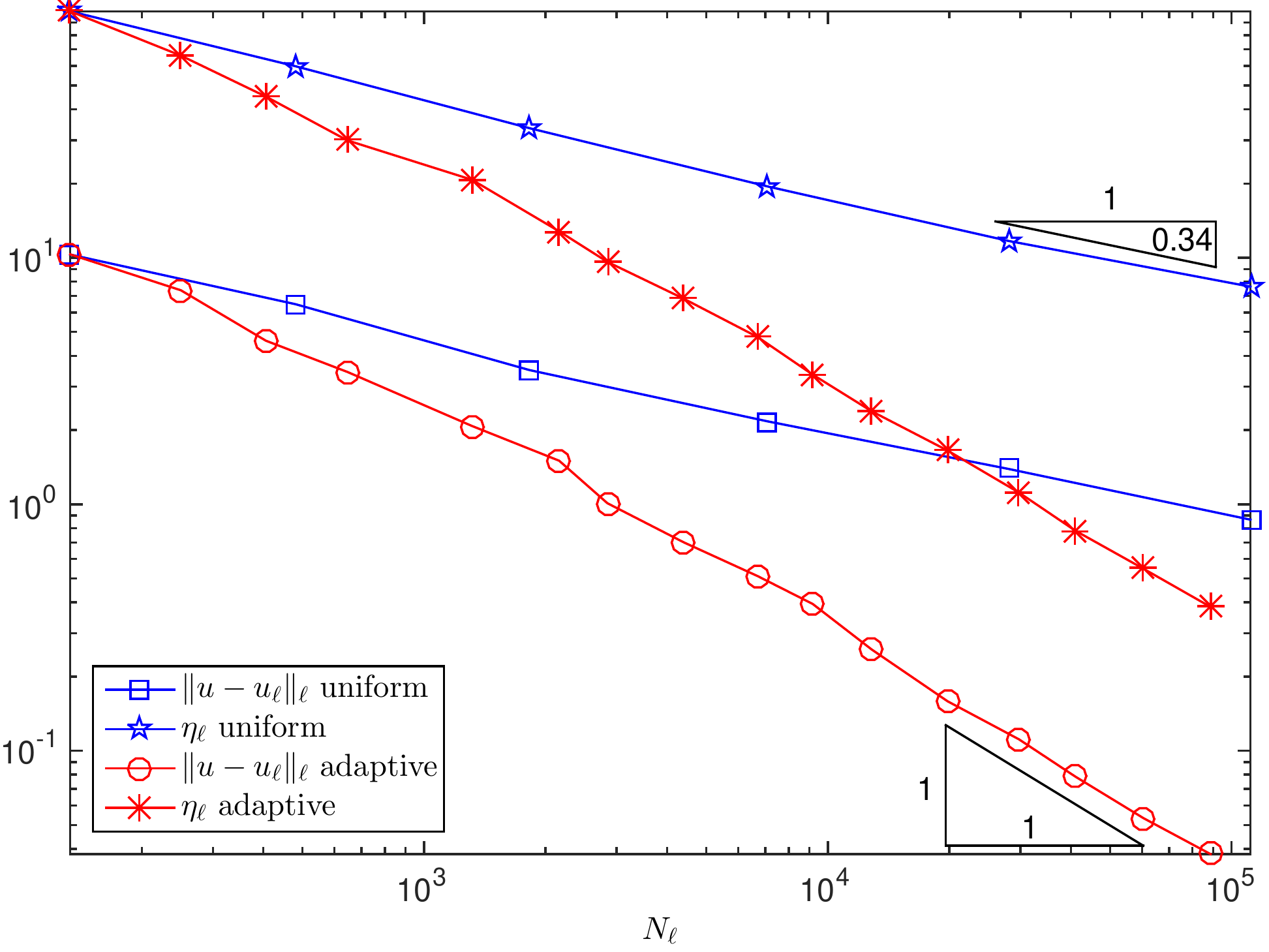}}
\subfigure[]
   { \includegraphics[scale=0.35]{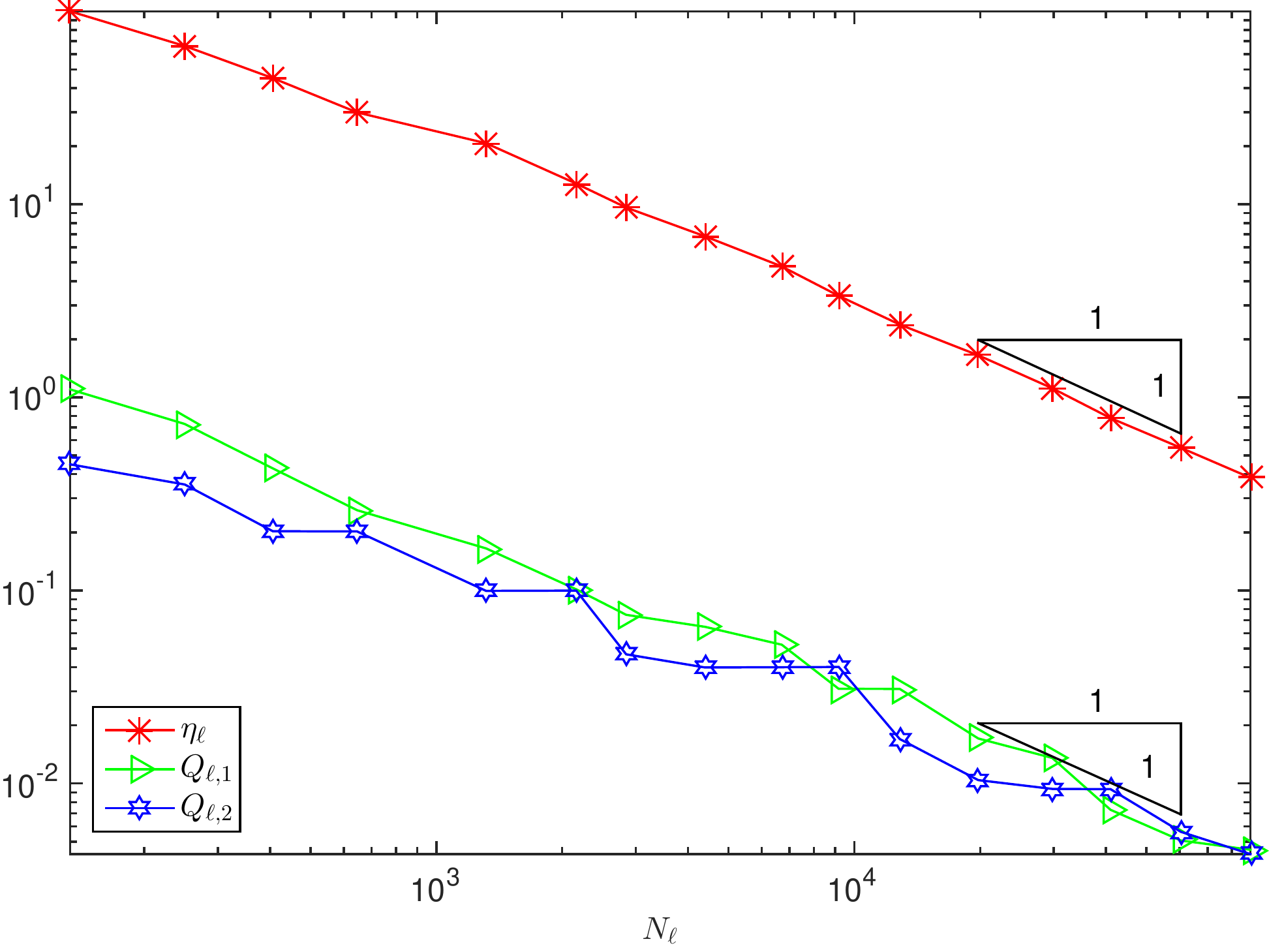}}
\caption{Convergence histories for the cubic $C^0$ interior penalty method
  for Example~3: (a) $\|u-u_\ell\|_\ell$ and
  $\eta_\ell$, (b)  $\eta_\ell$, $Q_{\ell,1}$ and $Q_{\ell,2}$}
\label{fig:P3Example3}
\end{figure}
\par
 The results in Table~\ref{table:LMP3Example3} agrees with
 the $O(N_\ell^{-1})$ bound for $\Lambda_\ell$ predicted by
 Lemma~\ref{lem:LMTest} and Figure~\ref{fig:P3Example3}\hspace{1pt}(b).
 Here $N_\ell$ increases from $N_0=133$ to $N_{15}=88699$.
\begin{table}[hh]
\begin{tabular}{c|c|c|c|c|c|c|c|c}
  $\ell$ & $0$ & $1$ &$2$ & $3$ & $4$ & $5$ &$6$ & $7$  \\
  \hline
  $\Lambda_\ell N_\ell$ & $11724$ &$ 1782$ &$ 1842$ & $32888$
  & $1046$ & $6439$ & $2974$ & $2588$
   \\
  \hline  \hline
   $\ell$ &  $8$ & $9$ & $10$ & $11$ & $12$ &$13$ & $14$ & $15$\\
   \hline
   $\Lambda_\ell N_\ell$ & $2781$ & $25657$ & $2215$& $3805$ & $5177$
   & $2030$ & $1092$ & $2355$  \\
\end{tabular}
\medskip
\caption{$\Lambda_\ell N_\ell$ for the adaptive cubic $C^0$ interior penalty method for
  Example~3}
 \label{table:LMP3Example3}
\end{table}
\par
 An adaptive mesh with 12841 dof is depicted in
 Figure~\ref{fig:ContactSetL2}\hspace{1pt}(c), where
 we observe strong refinement  around the reentrant corner and the coincidence set.

\section{Conclusions}\label{sec:Conclusions}
 We have developed a simple {\em a posteriori} error analysis of $C^0$ interior penalty methods
 for the displacement obstacle problem of clamped
 Kirchhoff plates by taking advantage of the fact that the Lagrange multiplier for the
 discrete problem can be represented naturally as the sum of Dirac point measures supported
 at the vertices of the triangulation.
   Numerical results indicate that the adaptive
 algorithm based on a standard {\em a posteriori} error estimator originally
 developed for boundary value problems also
 performs optimally  for quadratic and  cubic $C^0$ interior penalty methods
 for obstacle problems.
 However the theoretical justification of convergence and optimality for adaptive
 $C^0$ interior penalty methods remains open even in the case
 when the obstacle is absent.
\par
 The results in this paper can be extended to the displacement obstacle problem of
 the biharmonic equation with the boundary conditions of simply
 supported plates or the Cahn-Hilliard type.  In the case where $\O$ is convex, such
 problems are related to distributed elliptic optimal control problems
  with pointwise state constraints
  \cite{LGY:2009:Control,GY:2011:State,BSZ:2013:OptimalControl,BSZ:2015:PP} and
  can also be considered in three dimensional domains.
  Adaptive finite element methods for these problems based on the approach in this paper
  are ongoing projects.
%

\end{document}